\documentclass[final,a4paper]{siamltex}
\usepackage{amsmath,amssymb,amsfonts}
\usepackage{color}
\usepackage{enumerate}
\usepackage{verbatim}
\usepackage{eepic,epic,epsfig,epstopdf}
\usepackage[]{graphics}
\usepackage{graphicx,inputenc}
\usepackage{subfigure}
\usepackage{changepage}
\usepackage{amsmath}
\usepackage{threeparttable}
\usepackage{multirow}
\usepackage{extarrows}
\usepackage{url,color}
\usepackage{graphicx,subfig}
\graphicspath{{./pics/}}
\usepackage{bm,color,url}
\usepackage{algorithm,algorithmic}
\usepackage{geometry}
\usepackage{setspace}
\usepackage{booktabs}
\geometry{left=1.3in,right=1.25in,top=1.3in,bottom=1.25in}

\usepackage[notref,notcite]{showkeys}
\newtheorem{assumption}{Assumption}[section]
\newtheorem{remark}{Remark}[section]

\allowdisplaybreaks[4]

\def\RelU2{\mathrm{ReLU}^2}
\def\RelU3{\mathrm{ReLU}^3}

\title{A rate of convergence
of Physics Informed Neural Networks for the linear second order elliptic PDEs}
\author{
	Yuling Jiao\thanks{School of Mathematics and Statistics, and
		Hubei Key Laboratory of Computational Science, Wuhan University, P.R. China. (yulingjiaomath@whu.edu.cn)}\quad\and
	Yanming Lai \thanks{School of Mathematics and Statistics,  Wuhan University, P.R. China. (laiyanming@whu.edu.cn)}\quad\and
	 Dingwei Li\thanks{School of Mathematics and Statistics, Wuhan University, P.R. China.
  ({lidingv@whu.edu.cn})}\quad
\and Xiliang Lu\thanks{School of Mathematics and Statistics,  and
		Hubei Key Laboratory of Computational Science, Wuhan University, P.R. China.
  ({xllv.math@whu.edu.cn})}
\quad\and
Fengru Wang \thanks{School of Mathematics and Statistics,  and
		Hubei Key Laboratory of Computational Science, Wuhan University, P.R. China.
 ({wangfr@whu.edu.cn})}
\quad\and
 Yang Wang \thanks{Department of Mathematics, The Hong Kong University of Science and Technology,
Clear Water Bay, Kowloon, Hong Kong ({yangwang@ust.hk})}
\quad \and
 Jerry Zhijian Yang\thanks{School of Mathematics and Statistics, and
		Hubei Key Laboratory of Computational Science, Wuhan University, P.R. China.
  ({zjyang.math@whu.edu.cn})}
}

\begin{document}
	\maketitle
	
	\begin{abstract}
In recent years, physical informed neural networks (PINNs) have been  shown  to be a powerful tool for solving PDEs empirically. However, numerical analysis of PINNs  is still missing. In this paper, we prove the convergence rate to PINNs for the second order elliptic equations with Dirichlet boundary condition, by establishing the upper bounds on the number of training samples, depth and width of the deep neural networks to achieve desired accuracy. The error of PINNs is decomposed into approximation error and statistical error, where the approximation error is given in $C^2$ norm with $\mathrm{ReLU}^{3}$ networks (deep network with activations function $\max\{0,x^3\}$) and the statistical error is estimated by Rademacher complexity. We derive the bound on the Rademacher complexity of the non-Lipschitz composition of gradient norm with $\mathrm{ReLU}^{3}$ network, which is of immense independent interest.
\end{abstract}

\begin{keywords}
PINNs, $\mathrm{ReLU}^3$ neural network, B-splines,  Rademacher complexity.
\end{keywords}


\section{Introduction}
\label{section intro}
\par Classical  numerical methods such as the finite element method are successful to solve the low-dimensional PDEs, see e.g., \cite{brenner2007mathematical,ciarlet2002finite,Quarteroni2008Numerical,Thomas2013Numerical,Hughes2012the}. However these methods may encounter some difficulties in both theoretical analysis and numerical implementation for the high-dimensional PDEs.
 Motivated by the facts that deep learning method for  high-dimensional data analysis has been achieved great successful applications in discriminative, generative and reinforcement learning \cite{he2015delving,Goodfellow2014Generative,silver2016mastering}, solving high dimensional PDEs with  deep neural networks becomes an extremely potential  approach and  has  attracted a lot of attentions  \cite{Cosmin2019Artificial,Justin2018DGM,DeepXDE,raissi2019physics,Weinan2017The,Yaohua2020weak,Berner2020Numerically,Han2018solving}.
 Due to the excellent approximation ability of the deep neural networks, several numerical schemes  have been proposed to solve PDEs with deep neural networks  including the deep Ritz method (DRM)  \cite{Weinan2017The}, physics-informed neural networks (PINNs) \cite{raissi2019physics}, and deep Galerkin method (DGM) \cite{Yaohua2020weak}.
 Both DRM  and DGM are applied to variational forms of PDEs, and PINNs are based on residual minimization to the differential equation, see \cite{Cosmin2019Artificial,Justin2018DGM,DeepXDE,raissi2019physics}, which  can be extended to general PDEs \cite{jagtap2020conservative,lagaris1998artificial,fpinns,npinns}.
	
Despite the above mentioned deep PDEs solvers work well empirically, rigorous numerical analysis for these methods are far from complete.
The convergence rate of DRM with two layer networks and deep networks are studied in \cite{luo2020two,hong2021rademacher,lu2021priori,duan2021convergence}, the convergence of PINNs are given in
\cite{shin2020convergence,shin2020error,mishra2020}.
In this work, we will provide the nonasymptotic convergence rate of the PINNs with $\mathrm{ReLU}^3$ networks, i.e., a quantitative error estimation with respect to the topological structure of the neural networks (the depth and width) and the number of the samples. Hence it gives a rule to determine the hyper-parameters to achieve a desired accuracy. Our contributions are summarized as follows.

\noindent\textbf{Our contributions and main results}
	\begin{itemize}
		\item
We obtain the approximation results   $\mathrm{ReLU}^3$ network in $C^2(\bar\Omega)$, see Theorem  \ref{app error}, i.e.,
		 $\forall \bar u \in C^3(\bar\Omega) $ and for any $\epsilon>0$, there exists a $\mathrm{ReLU}^3$ network $u_\phi$ with depth $\lceil\log_2d\rceil+2$ and width $C(d,\|\bar u\|_{C^3(\bar{\Omega})})\left(\frac{1}{\epsilon}\right)^d$ such that
		\begin{equation*}
			\|\bar u-u_\phi\|_{C^2(\bar{\Omega})}\leq \epsilon,
		\end{equation*}
where $d,\ \bar{\Omega},\ C(d,\|\bar{u}\|)$ stands for the dimension of $x$, the closure of the domain $\Omega$ and some numerical  constant that only depends on  $(d,\|\bar{u}\|)$, respectively.
		\item
We establish an upper  bound of  the statistical error for PINNs by applying the tools of Pseudo dimension, especially we give an upper bound of
 the Rademacher complexity  to the derivative of  $\mathrm{ReLU}^3$ network  which is non-Lipschitz composition with   $\mathrm{ReLU}^3$ network,  via  calculating  the
Pseudo dimension of networks with  $\mathrm{ReLU}$, $\mathrm{ReLU}^2$ and  $\mathrm{ReLU}^3$ activation functions, see Theorem \ref{sta error}.
We prove that $\forall \mathcal{D},\mathcal{W}\in\mathbb{N}$ and  $\epsilon>0$, if the number of training samples
in PINNs is with the order $O\left(\mathcal{D}^6\mathcal{W}^2(\mathcal{D}+\log\mathcal{W})\left(\frac{1}{\epsilon}\right)^{2+\delta}\right)$,
		where $\delta$ is an arbitrarily positive number, then
		the statistical error \begin{equation*}
			\mathbb{E}_{\{{X_k}\}_{k=1}^{N},\{{Y_k}\}_{k=1}^{M}}\sup_{u\in\mathcal{P}}\left|\mathcal{L}(u)-\widehat{\mathcal{L}}(u)\right|\leq \epsilon,
		\end{equation*}
where $\mathcal{L}$ and   $\widehat{\mathcal{L}}$  are loss functions defined in  (\ref{lossp}) and (\ref{losss}) respectively.
\item Based on the above  bounds on approximation error and statistical error, we establish a   nonasymptotic convergence rate to PINNs for second order elliptic equations  with Dirichlet boundary condition, see Theorem \ref{total error}, i.e.,
	$\forall \epsilon >0$ if we set the depth and width by $$\mathcal{D} = \mathcal{O}(\lceil\log_2d\rceil+2),\quad \mathcal{W} = \mathcal{O}\left(\frac{1}{\epsilon^d}\right)$$ in the  $\mathrm{ReLU}^3$ network
		and set the number of training  samples used in PINNs as
		$\mathcal{O}(\left(\frac{1}{\epsilon}\right)^{2d+4+\delta}),$
		where $\delta$ is an arbitrarily positive number, then
		\begin{equation*}
			\mathbb{E}_{\{{X_k}\}_{k=1}^{N},\{{Y_k}\}_{k=1}^{M}}\left\|\widehat{u}_{\phi}-u^{*}\right\|_{H^{\frac{1}{2}}(\Omega)}\leq\epsilon,
		\end{equation*}
	\end{itemize}
where $\widehat{u}_{\phi}$ is the minimizer in (\ref{eqn:pinns}).

	The paper is organized as follows. In Section \ref{section the pinns method and error decomposition} we describe the problem setting and introduce the error decomposition for the PINNs. In Section \ref{section approximation error} the approximation error of  $\mathrm{ReLU}^3$ networks in Sobolev spaces based on approximation results of  B-splines is proved. In Section \ref{section statistical error} we provide the statistical error estimation by using the bound of Rademacher complexity. In Section \ref{section convergence rate for the pinns} the convergence rate of PINNs is shown.  A conclusion and short discussion is given in Section \ref{conclusion}.

\section{The PINNs Method and Error Decomposition}\label{section the pinns method and error decomposition}

\subsection{Preliminary and PINNS method}\label{notation}
We give the notations of neural networks and function spaces which will be used later. Let $\mathcal{D}\in\mathbb{N}$, a function $f$ is called a neural network if it is implemented by:
\begin{equation*}
\begin{array}{l}
\mathbf{f}_{0}(\mathbf{x})=\mathbf{x},\\
\mathbf{f}_{\ell}(\mathbf{x})=\mathbf{\varrho}_{\ell}\left(A_{\ell} \mathbf{f}_{\ell-1}+\mathbf{b}_{\ell}\right)
=(\mathbf{\varrho}_{i}^{(\ell)}(\left(A_{\ell} \mathbf{f}_{\ell-1}+\mathbf{b}_{\ell}\right)_i))
\quad \text { for } \ell=1, \ldots, \mathcal{D}-1, \\
\mathbf{f}:=\mathbf{f}_{\mathcal{D}}(\mathbf{x})=A_{\mathcal{D}}\mathbf{f}_{\mathcal{D}-1}+\mathbf{b}_{\mathcal{D}},
\end{array}
\end{equation*}
where $A_{\ell}=\left(a_{ij}^{(\ell)}\right)\in\mathbb{R}^{n_{\ell}\times n_{\ell-1}}$,
$\mathbf{b}_{\ell}=\left(b_i^{(\ell)}\right)\in\mathbb{R}^{n_{\ell}}$ and $\mathbf{\varrho}_{\ell}:\ \mathbb{R}^{n_{\ell}}\rightarrow\mathbb{R}^{n_{\ell}}$ is the active function. The hyper-parameters $\mathcal{D}$ and $\mathcal{W}:=\max\{N_{\ell},\ell=0,\cdots,\mathcal{D}\}$ are called the depth and the width of the network, respectively. Also $\sum_{\ell=1}^{L} n_{\ell} $ is called the number of units of $\mathbf{f}$ and  $\phi = \{A_{\ell},\mathbf{b}_{\ell}\}_{\ell}$ are called the weight parameters. Let $\Phi$ be a set of activation functions and $X$ be a Banach space, we define the neural network function class by
\begin{equation*}
\begin{aligned}
\mathcal{N}(\mathcal{D},\mathcal{W},\{\|\cdot\|_{X},\mathcal{B}\},\Phi):
=\{&f:f \text{ is implemented by a neural network with depth }\mathcal{D}\\
&\text{ and width } \mathcal{W}, \|f\|_X\leq\mathcal{B}, \text{ and }\varrho_{i}^{(\ell)}\in\Phi \text{ for each }i \text{ and }\ell\}.
\end{aligned}
\end{equation*}

Then we introduce the function spaces and the partial differential equations what we are interested in. The governing equation is defined in an open bounded domain $\Omega\subset\mathbb{R}^d$ with smooth boundary $\partial\Omega$, and without loss of generality we may assume that $\Omega\subset[0,1]^d$. Let $\alpha=(\alpha_1,\cdots,\alpha_n)$ be an $n$-dimensional index with $|\alpha|:=\sum_{i=1}^{n}\alpha_i$ and $s$ be a nonnegative integer. The standard function spaces include continuous function space, $L^p$ space, Sobolev spaces are given below.
	\begin{align*}
	&C(\Omega):=\{\text{all the continuous functions defined on }\Omega\},\quad C^s(\Omega):=\{f:\Omega\to\mathbb{R}\ |\ D^{\alpha}f\in C(\Omega)\},\\
	&C(\bar\Omega):=\{\text{all the continuous functions defined on }\bar\Omega\},\quad \|f\|_{C(\bar\Omega)}:=\max_{x\in\bar{\Omega}}|f(x)|,\\
	&C^s(\bar\Omega):=\{f:\bar\Omega\to\mathbb{R}\ |\ D^{\alpha}f\in C(\bar\Omega)\},\quad \|f\|_{C^s(\bar\Omega)}:=\max_{x\in\bar{\Omega},|\alpha|\leq s}|D^{\alpha}f(x)|,\\
	&L^p(\Omega):=\left\{f:\Omega\to\mathbb{R}\ |\ \int_{\Omega}|f|^pdx<\infty\right\},\quad \|f\|_{L^p(\Omega)}:=\left[\int_{\Omega}|f|^p(x)dx\right]^{1/p},\quad \forall p\in[1,\infty),\\
	&L^{\infty}(\Omega):=\{f:\Omega\to\mathbb{R}\ |\ \exists C>0 \ s.t.\ |f|\leq C \ a.e.\},\quad\|f\|_{L^{\infty}(\Omega)}:=\inf\{C \ | \ |f|\leq C \ a.e.\}, \\
    & W^{s,p}(\Omega):=\{f:\Omega\to\mathbb{R}\ |\ D^{\alpha}f\in L^p(\Omega),\quad |\alpha|\leq s\},\quad\|f\|_{W^{s,p}(\Omega)}:=\left(\sum_{|\alpha|\leq s}\|D^{\alpha}f\|_{L^p(\Omega)}^p\right)^{1/p}
\end{align*}
If $s$ is a nonnegative real number, the fractional Sobolev space $W^{s,p}(\Omega)$ can be defined as follows: setting $\theta=s-\lfloor{s}\rfloor$ and
\begin{align*}
&W^{s,p}(\Omega):=\left\{f:\Omega\to\mathbb{R}\ |\ \int_{\Omega}\int_{\Omega}\frac{|D^{\alpha}f(x)-D^{\alpha}f(y)|^p}{|x-y|^{\theta p+d}}dxdy<\infty,\quad\forall |\alpha|=\lfloor{s}\rfloor\right\},\\
&\|f\|_{W^{s,p}(\Omega)}:=\left(\|f\|_{W^{\lfloor{s}\rfloor,p}(\Omega)}^p+\sum_{|\alpha|=\lfloor{s}\rfloor}\int_{\Omega}\int_{\Omega}\frac{|D^{\alpha}f(x)-D^{\alpha}f(y)|^p}{|x-y|^{\theta p+d}}dxdy\right)^{1/p}.
\end{align*}
Let $C_{0}^{\infty }({\Omega} )$ be the set of smooth functions with compact support in $\Omega$, and $W_0^{s,p}(\Omega)$ is the completion space of $C_0^{\infty}(\Omega)$ in $W^{s,p}(\Omega)$. For $s<0$, $W^{s,p}(\Omega)$ is the dual space of $W_{0}^{-s,q}(\Omega)$ with $q$ satisfying $\frac{1}{p}+\frac{1}{q}=1$. When $p=2$, $W^{s,p}(\Omega)$ is a Hilbert space and it is also denoted by $H^s(\Omega)$. The constant $C$ may vary from place to place but it is independent to the hyper-parameters of neural networks.

We will consider the following linear second order elliptic equation with Dirichlet boundary condition, where $\frac{\partial u}{\partial x_{i}}(x)$ and $\frac{\partial^{2} u}{\partial x_{i} \partial x_{j}}(x)$ are denoted by $u_{x_{i}}$ and $u_{x_{i} x_{j}}$, respectively.
	\begin{equation} \label{second order elliptic equation}
		\left\{\begin{aligned}
			-\sum_{i,j=1}^{d}a_{ij}u_{x_ix_j}+\sum_{i=1}^{d}b_iu_{x_i}+cu&=f  \quad\text { in } \Omega, \\
			eu &= g   \quad\text { on } \partial \Omega, \\
		\end{aligned}\right.
	\end{equation}
	where $a_{ij}\in C(\bar{\Omega})$, $b_i,c\in L^{\infty}(\Omega)$, $f\in L^2(\Omega)$, $e\in C(\partial\Omega)$, $g\in L^2(\partial\Omega)$ with the strictly elliptic condition, i.e., there exists a constant $\lambda> 0$ such that $\sum_{i,j}a_{ij}\xi_i\xi_j \geq \lambda|\xi|^2,\quad\forall x\in\Omega,\xi\in\mathbb{R}^d$. Define the constants by $\mathfrak{A}=\max_{i,j}\{\|a_{ij}\|_{C(\bar{\Omega})}\}$, $\mathfrak{B}=\max_{i}\{\|b_{i}\|_{L^{\infty}(\Omega)}\}$, $\mathfrak{C}=\|c\|_{L^{\infty}(\Omega)}$, $\mathfrak{E}=\|e\|_{C(\partial\Omega)}$, $\mathfrak{F}=\|f\|_{L^2(\Omega)}$, $\mathfrak{G}=\|g\|_{L^2(\partial\Omega)}$, and $\mathfrak{Z}=\{\mathfrak{A},\mathfrak{B},\mathfrak{C},\mathfrak{E},\mathfrak{F},\mathfrak{G}\}$.


Instead of solving problem \eqref{second order elliptic equation}, we consider a minimization problem with the loss functional $\mathcal{L}$ on $C^2(\Omega)\cap C(\bar\Omega)$:
	\begin{equation*}
		\mathcal{L}(u):=\int_{\Omega}\left(-\sum_{i,j=1}^{d}a_{ij}u_{x_ix_j}+\sum_{i=1}^{d}b_iu_{x_i}+cu-f\right)^2dx
		+\int_{\partial\Omega}\left(eu - g\right)^2dx.
	\end{equation*}

\begin{assumption}\label{ass:exist}
Assume that (\ref{second order elliptic equation}) has a unique strong solution $u^*\in C^2({\Omega})\cap C(\bar\Omega)$.
\end{assumption}

If Assumption \ref{ass:exist} holds, then $u^*$ is also the unique minimizer of loss functional $\mathcal{L}$. Define $|\Omega|$ and $|\partial\Omega|$ be the measure of $\Omega$ and its boundary respectively, i.e., $|\Omega|:=\int_{\Omega}1 dx$ and $|\partial\Omega|:=\int_{\partial\Omega}1 ds$, then $\mathcal{L}$ can be equivalently written by
\begin{align}
\mathcal{L}(u)=&|\Omega|\mathbb{E}_{X\sim U(\Omega)}\left(-\sum_{i,j=1}^{d}a_{ij}(X)u_{x_ix_j}(X)+\sum_{i=1}^{d}b_i(X)u_{x_i}(X)+c(X)u(X)-f(X)\right)^2  \nonumber\\
&+|\partial\Omega|\mathbb{E}_{Y\sim U(\partial\Omega)}\left(e(Y)u(Y) - g(Y)\right)^2\label{lossp},
\end{align}
	where $U(\Omega)$ and $U(\partial\Omega)$ are uniform distribution on $\Omega$ and $\partial\Omega$, respectively.

To solve minimization of $\mathcal{L}(u)$ numerically, a standard discrete version of $\mathcal{L}$ is given by:
	\begin{align}
	\widehat{\mathcal{L}}(u)=&\frac{|\Omega|}{N}\sum_{k=1}^{N}\left(-\sum_{i,j=1}^{d}a_{ij}(X_k)u_{x_ix_j}(X_k)+\sum_{i=1}^{d}b_i(X_k)u_{x_i}(X_k)+c(X_k)u(X_k)-f(X_k)\right)^2\nonumber \\
	&+\frac{|\partial\Omega|}{M}\sum_{k=1}^{M}\left(e(Y_k)u(Y_k) - g(Y_k)\right)^2, \label{losss}
	\end{align}
	where $\{X_k\}_{k=1}^{N}$ and $\{Y_k\}_{k=1}^{M}$ are i.i.d. random samples according to the uniform distribution  $U(\Omega)$ on $\Omega$ and $U(\partial\Omega)$ on $\partial \Omega$, respectively.
	
The PINNs method considers a minimization problem with respect to $\widehat{\mathcal{L}}$:
\begin{equation}\label{eqn:pinns}
\min_{u_{\phi}\in\mathcal{P}}\widehat{\mathcal{L}}(u_{\phi}),
\end{equation}
where the admissible set $\mathcal{P}$ refers to the deep neural network function class parameterized by $\phi$.
Assume there exists at least one minimizer to \eqref{eqn:pinns}, which is denote by $\widehat{u}_{\phi}$, and we will give an error estimation to $u^*$ and $\widehat{u}_\phi$ for some carefully chosen admissible set $\mathcal{P}$. The definition of $\mathcal{P}$ will be given at beginning of Section \ref{section approximation error}.

\begin{remark}
In practical, it is difficult to compute $\widehat{u}_\phi$ precisely due to the nonlinear structure of $\mathcal{P}$. People often call a (random) solver $\mathcal{A}$, say SGD, to
solve \eqref{eqn:pinns} and let the output $u_{\phi\mathcal{A}}$ be the final solution. In this work we ignore the optimization error and only consider the error between $u^*$ and $\widehat{u}_\phi$.
\end{remark}

\begin{remark}
The existence of the minimizer to problem \eqref{eqn:pinns} depends on the choice of the admissible set $\mathcal{P}$. In case of the minimizer may not exist for some admissible set $\mathcal{P}$, we may consider a $\gamma$-optimal solution $\widehat{u}_{\phi}$, i.e., $\widehat{\mathcal{L}}(\widehat{u}_{\phi}) \leq \inf_{u_{\phi}\in\mathcal{P}}\widehat{\mathcal{L}}(u_{\phi}) + \gamma$ for a arbitrary small positive $\gamma$. The main results in this work are also true for the $\gamma$-optimal solution.
\end{remark}

\subsection{Error Decomposition}	
	The a priori estimation to equation (\ref{second order elliptic equation}) can be find in \cite{agmon1959estimates}, and we list it below for completeness.
	\begin{lemma}\cite{agmon1959estimates} \label{Lp estimate}
		For $u\in H^{\frac{1}{2}}(\Omega)\bigcap L^2(\partial\Omega)$,
		\begin{equation*}
			\|{u}\|_{H^{\frac{1}{2}}(\Omega)}\leq
			C\left \|-\sum_{i,j=1}^{d}a_{ij}{u}_{x_ix_j}+\sum_{i=1}^{d}b_i{u}_{x_i}+c{u}\right\|_{H^{-\frac{3}{2}}(\Omega)}+C\left\|e{u}\right\|_{L^2(\partial\Omega)}.
		\end{equation*}
	\end{lemma}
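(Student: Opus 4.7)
The plan is to read the inequality as a classical very-weak (transposition) a priori estimate for the second-order Dirichlet problem: data in $H^{-3/2}(\Omega)\times L^2(\partial\Omega)$ should control the solution in $H^{1/2}(\Omega)$. My approach would be to derive it by duality against the adjoint problem and then interpolate between two classical endpoints.

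First I would set up the formal adjoint
\[
L^{*}v \;=\; -\sum_{i,j=1}^{d}(a_{ij}v)_{x_ix_j}-\sum_{i=1}^{d}(b_{i}v)_{x_i}+cv,
\]
and record the maximal elliptic regularity for the homogeneous adjoint Dirichlet problem: for every $\phi\in L^{2}(\Omega)$ there exists $v\in H^{2}(\Omega)\cap H^{1}_{0}(\Omega)$ with $L^{*}v=\phi$ and $\|v\|_{H^{2}(\Omega)}+\|\partial_{n}v\|_{L^{2}(\partial\Omega)}\le C\|\phi\|_{L^{2}(\Omega)}$. For smooth $u$, integration by parts against such a $v$ produces an identity of the schematic form
\[
\int_{\Omega}u\,\phi\,dx \;=\; \int_{\Omega}(Lu)\,v\,dx + \int_{\partial\Omega}(eu)\,\Psi(v)\,dS,
\]
where $\Psi(v)$ collects boundary traces of $v$ and $\partial_{n}v$, all bounded in $L^{2}(\partial\Omega)$ by $\|\phi\|_{L^{2}}$ thanks to the regularity just recorded. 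Taking the supremum over $\|\phi\|_{L^{2}}\le 1$ and using the Hahn--Banach identification of $L^{2}(\Omega)$ with its dual then yields the very-weak endpoint $\|u\|_{L^{2}(\Omega)}\le C\bigl(\|Lu\|_{H^{-2}(\Omega)}+\|eu\|_{H^{-1/2}(\partial\Omega)}\bigr)$.

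The other endpoint is the classical elliptic a priori estimate $\|u\|_{H^{2}(\Omega)}\le C\bigl(\|Lu\|_{L^{2}(\Omega)}+\|eu\|_{H^{3/2}(\partial\Omega)}\bigr)$. I would then interpolate these two endpoints at parameter $\theta=1/4$: real interpolation identifies $(L^{2},H^{2})_{1/4,2}=H^{1/2}$ on the interior side, $(H^{-2},L^{2})_{1/4,2}=H^{-3/2}$ on the source side, and $(H^{-1/2},H^{3/2})_{1/4,2}=L^{2}$ on the boundary side; assembling the three yields exactly the stated inequality for smooth $u$, and a density argument extends it to every $u\in H^{1/2}(\Omega)\cap L^{2}(\partial\Omega)$ with $Lu\in H^{-3/2}(\Omega)$.

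The hard part I expect is the adjoint step. Only $a_{ij}\in C(\bar\Omega)$ is assumed, so the second derivatives $(a_{ij}v)_{x_ix_j}$ in $L^{*}$ are not classically defined and the $H^{2}$ regularity of $L^{*}$ is not entirely immediate. I would handle this either by mollifying the $a_{ij}$ and passing to the limit in the uniform estimate, or by invoking the $W^{2,p}$ Calder\'on--Zygmund theory for operators with continuous coefficients combined with Sobolev embedding to land back in $H^{2}$. A secondary subtlety is making sense of the trace $eu|_{\partial\Omega}$ when $u\in H^{1/2}(\Omega)$, for which the ordinary trace is not defined; this is read off the very-weak transposition identity itself, which is precisely what the right-hand side of the lemma implicitly encodes.
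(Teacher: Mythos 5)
The paper itself gives no argument for this lemma; it simply cites Agmon--Douglis--Nirenberg (1959). Your transposition-plus-interpolation derivation is therefore a self-contained alternative, and it is in fact the standard Lions--Magenes route to exactly such low-regularity a priori estimates: duality against the adjoint Dirichlet problem at $s=0$, the ADN estimate at $s=2$, and real interpolation at $\theta=1/4$ to land at $s=1/2$. The choice of endpoints and exponent is the right one, and you are right to flag the adjoint-regularity problem caused by $a_{ij}$ being only continuous.

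There is, however, a genuine gap at the heart of the very-weak endpoint, and it is precisely the point the Lions--Magenes theory was built to resolve. You write the endpoint with $\|Lu\|_{H^{-2}(\Omega)}$, but $H^{-2}(\Omega)$ is the dual of $H^{2}_{0}(\Omega)$, whereas the adjoint solution $v$ lies only in $H^{2}(\Omega)\cap H^{1}_{0}(\Omega)$: its normal derivative on $\partial\Omega$ is non-zero (it must be, since $\partial_{n}v$ is what pairs with the boundary datum). Thus the pairing $\langle Lu,v\rangle$ does not extend continuously to all of $H^{-2}(\Omega)$; one must replace $H^{-2}(\Omega)$ by the dual of $H^{2}(\Omega)\cap H^{1}_{0}(\Omega)$ (the $\Xi^{-2}$ space of Lions--Magenes). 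Once that substitution is made, the interpolation identity $(H^{-2},L^{2})_{1/4,2}=H^{-3/2}$ you invoke is no longer available off the shelf: the relevant scale is the $\Xi^{-s}$ scale, which agrees with $H^{-s}$ only for $s<1/2$, with exceptional behaviour at the half-integers that must be tracked explicitly. A secondary omission: interpolating an a priori \emph{lower} bound is legitimate only because the solution operator is an isomorphism at both endpoints and one interpolates its inverse; this requires asserting well-posedness at $s=0$ and $s=2$, not merely the two inequalities. Neither issue invalidates your strategy — both are exactly what Lions--Magenes carry out — but as written the sketch elides the step that makes the theorem non-trivial.
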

Next we decompose the error into the approximation error and statistical error separately.
	\begin{proposition} \label{error decomposition}
		Assume that $\mathcal{P}\subset H^2(\Omega)\bigcap C(\bar{\Omega})$, then
		\begin{align*}
			&\|\widehat{u}_{\phi}-u^*\|_{H^{\frac{1}{2}}(\Omega)}^2\\
			&\leq \underbrace{C\inf_{\bar{u}\in\mathcal{P}}\left[3\max\{2d^2\mathfrak{A}^2,d\mathfrak{B}^2,\mathfrak{C}^2\}\|\bar{u}-u^*\|_{H^2(\Omega)}^2+|\partial\Omega|\mathfrak{E}^2\|\bar{u}-u^*\|_{C(\partial{\Omega})}^2\right]}_{\mathcal{E}_{app}}\\
			&\quad+\underbrace{C \sup _{u \in \mathcal{P}}\left|\mathcal{L}(u)-\widehat{\mathcal{L}}(u)\right|}_{\mathcal{E}_{sta}}.
		\end{align*}
	\end{proposition}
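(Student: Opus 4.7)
The plan is to first apply Lemma~\ref{Lp estimate} to reduce the $H^{1/2}$-error on $u_{\phi_{\mathcal{A}}}-u^*$ to the population loss $\mathcal{L}(u_{\phi_{\mathcal{A}}})$, then telescope $\mathcal{L}(u_{\phi_{\mathcal{A}}})$ against the empirical loss and an arbitrary reference network $\bar{u}\in\mathcal{P}$ so that the three contributions $\mathcal{E}_{app}$, $\mathcal{E}_{sta}$, $\mathcal{E}_{opt}$ fall out naturally. Concretely, I would apply the lemma to $v:=u_{\phi_{\mathcal{A}}}-u^*$: since $u^*$ solves \eqref{second order elliptic equation} in the strong sense, the interior operator applied to $v$ coincides with the PDE residual of $u_{\phi_{\mathcal{A}}}$ against $f$, and $ev$ with the boundary residual $eu_{\phi_{\mathcal{A}}}-g$. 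Because $\Omega$ is bounded with smooth boundary, the inclusion $L^2(\Omega)\hookrightarrow H^{-3/2}(\Omega)$ is continuous, so the lemma yields
\begin{equation*}
\|u_{\phi_{\mathcal{A}}}-u^*\|_{H^{1/2}(\Omega)}^2\le C\,\mathcal{L}(u_{\phi_{\mathcal{A}}}),
\end{equation*}
with a constant $C$ depending only on $|\Omega|$ and $|\partial\Omega|$.

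Next, for any $\bar{u}\in\mathcal{P}$ I would telescope
\begin{align*}
\mathcal{L}(u_{\phi_{\mathcal{A}}})
&=\bigl[\mathcal{L}(u_{\phi_{\mathcal{A}}})-\widehat{\mathcal{L}}(u_{\phi_{\mathcal{A}}})\bigr]
+\bigl[\widehat{\mathcal{L}}(u_{\phi_{\mathcal{A}}})-\widehat{\mathcal{L}}(\widehat{u}_{\phi})\bigr]\\
&\quad+\bigl[\widehat{\mathcal{L}}(\widehat{u}_{\phi})-\widehat{\mathcal{L}}(\bar{u})\bigr]
+\bigl[\widehat{\mathcal{L}}(\bar{u})-\mathcal{L}(\bar{u})\bigr]+\mathcal{L}(\bar{u}).
\end{align*}
The first and fourth brackets are each dominated by $\sup_{u\in\mathcal{P}}|\mathcal{L}(u)-\widehat{\mathcal{L}}(u)|$, producing $\mathcal{E}_{sta}$; the second bracket is exactly $\mathcal{E}_{opt}$; and the third bracket is nonpositive since $\widehat{u}_{\phi}$ minimizes $\widehat{\mathcal{L}}$ over $\mathcal{P}$. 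It remains to bound $\mathcal{L}(\bar{u})$ by the approximation quality of $\bar u$.

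To this end, I would use $\mathcal{L}(u^*)=0$ to rewrite the integrands of $\mathcal{L}(\bar u)$ in terms of $v:=\bar u-u^*$, then apply $(a_1+a_2+a_3)^2\le 3(a_1^2+a_2^2+a_3^2)$ and Cauchy--Schwarz termwise. This gives $\bigl(\sum_{i,j}a_{ij}v_{x_ix_j}\bigr)^2\le 2d^2\mathfrak{A}^2\sum_{|\alpha|=2}|D^\alpha v|^2$ (the factor $2$ coming from the symmetry of mixed partials), $\bigl(\sum_i b_i v_{x_i}\bigr)^2\le d\mathfrak{B}^2|\nabla v|^2$, and $(cv)^2\le \mathfrak{C}^2 v^2$. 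Integrating over $\Omega$ and dominating each seminorm by $\|v\|_{H^2(\Omega)}^2$ yields the interior bound $3\max\{2d^2\mathfrak{A}^2,d\mathfrak{B}^2,\mathfrak{C}^2\}\|\bar u-u^*\|_{H^2(\Omega)}^2$, while on the boundary $\|e(\bar u-u^*)\|_{L^2(\partial\Omega)}^2\le |\partial\Omega|\mathfrak{E}^2\|\bar u-u^*\|_{C(\bar\Omega)}^2$. Taking the infimum over $\bar u\in\mathcal{P}$ recovers $\mathcal{E}_{app}$.

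The only mildly delicate step is the first one, where one must identify the correct dual embedding so that the $H^{-3/2}$-norm of the interior residual is dominated by its $L^2$-norm (which is the quantity actually appearing in the PINN loss); everything else is telescoping plus elementary inequalities, so I do not anticipate further obstacles.
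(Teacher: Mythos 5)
Your proposal is correct and follows essentially the same route as the paper: the same telescoping of $\mathcal{L}(u_{\phi_{\mathcal{A}}})$ against $\widehat{\mathcal{L}}$ and an arbitrary $\bar{u}\in\mathcal{P}$, the same invocation of Lemma~\ref{Lp estimate} together with the $L^2\hookrightarrow H^{-3/2}$ embedding, and the same termwise Cauchy--Schwarz bound giving $3\max\{2d^2\mathfrak{A}^2,d\mathfrak{B}^2,\mathfrak{C}^2\}\|\bar u-u^*\|_{H^2}^2+|\partial\Omega|\mathfrak{E}^2\|\bar u-u^*\|_{C(\bar\Omega)}^2$. The only cosmetic difference is that you pass to the residual of $v=\bar u-u^*$ by observing $\mathcal{L}(u^*)=0$ directly (the residuals of $u^*$ vanish identically since $u^*$ is a strong solution), whereas the paper reaches the identical expression via the G\^ateaux second derivative and an exact Taylor expansion of the quadratic functional $\mathcal{L}$; in this setting the two are literally the same computation.
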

\begin{proof}
		For any $\bar{u}\in\mathcal{P}$, we have
		\begin{equation*}
			\begin{split}
				\mathcal{L}\left(\widehat{u}_{\phi}\right)-\mathcal{L}\left(u^{*}\right) &	= \widehat{\mathcal{L}}\left(\widehat{u}_{\phi}\right)-\widehat{\mathcal{L}}\left(\bar{u}\right) +\widehat{\mathcal{L}}\left(\bar{u}\right)-\mathcal{L}\left(\bar{u}\right)+\mathcal{L}\left(\bar{u}\right)-\mathcal{L}\left(u^{*}\right) \\
				&\leq  \left[\mathcal{L}\left(\bar{u}\right)-\mathcal{L}\left(u^{*}\right)\right]+\sup _{u \in \mathcal{P}}\left|\mathcal{L}(u)-\widehat{\mathcal{L}}(u)\right|,
			\end{split}
		\end{equation*}
where the last step is due to the fact that $\widehat{\mathcal{L}}\left(\widehat{u}_{\phi}\right)-\widehat{\mathcal{L}}\left(\bar{u}\right)\leq 0$. Since $\bar{u}$ can be any element in $\mathcal{P}$, we take the infimum for both side and obtain:
		\begin{equation} \label{error decomposition1}
							\mathcal{L}\left(\widehat{u}_{\phi}\right)-\mathcal{L}\left(u^{*}\right)
				\leq \inf_{\bar{u}\in\mathcal{P}}\left[\mathcal{L}\left(\bar{u}\right)-\mathcal{L}\left(u^{*}\right)\right]+ \sup _{u \in \mathcal{P}}\left|\mathcal{L}(u)-\widehat{\mathcal{L}}(u)\right|.
		\end{equation}
		Since $u^*$ is the strong solution of equation $(\ref{second order elliptic equation})$ and $\mathcal{L}(u^*)=0$, then $\forall u\in\mathcal{P}$ we have
		\begin{equation} \label{error decomposition2}
			\begin{split}
				&\mathcal{L}({u})-\mathcal{L}(u^*)
				=\int_{\Omega}\left(-\sum_{i,j=1}^{d}a_{ij}u_{x_ix_j}+\sum_{i=1}^{d}b_iu_{x_i}+cu-f\right)^2dx+\int_{\partial\Omega}(eu-g)^2dx\\	
				=&\int_{\Omega}\left(-\sum_{i,j=1}^{d}a_{ij}({u}-u^*)_{x_ix_j}+\sum_{i=1}^{d}b_i({u}-u^*)_{x_i}+c({u}-u^*)\right)^2dx+\int_{\partial\Omega}e^2({u}-u^*)^2dx\\
				\leq& 3\int_{\Omega}\left[\left(\sum_{i,j=1}^{d}a_{ij}({u}-u^*)_{x_ix_j}\right)^2+\left(\sum_{i=1}^{d}b_i({u}-u^*)_{x_i}\right)^2+\left(c({u}-u^*)\right)^2\right]dx\\
				&\quad+\int_{\partial\Omega}e^2({u}-u^*)^2dx\\
				\leq & 3\max\{2d^2\mathfrak{A}^2,d\mathfrak{B}^2,\mathfrak{C}^2\}\|{u}-u^*\|_{H^2(\Omega)}^2+|\partial\Omega|\mathfrak{E}^2\|{u}-u^*\|_{C({\partial{\Omega}})}^2.
			\end{split}
		\end{equation}
		On the other hand, by Lemma  \ref{Lp estimate} and inequality $\|\cdot\|_{H^{-\frac{3}{2}}(\Omega)}\leq C\|\cdot\|_{L^2(\Omega)}$, we have $\forall u \in H^{\frac{1}{2}}(\Omega)\bigcap L^2(\partial\Omega)$
		\begin{equation} \label{error decomposition3}
			\begin{split}
				&\mathcal{L}({u})-\mathcal{L}(u^*)\\	
				= & \int_{\Omega}\left(-\sum_{i,j=1}^{d}a_{ij}({u}-u^*)_{x_ix_j}+\sum_{i=1}^{d}b_i({u}-u^*)_{x_i}+c({u}-u^*)\right)^2dx+\int_{\partial\Omega}e^2({u}-u^*)^2dx\\
				\geq & C\|{u}-u^*\|_{H^{\frac{1}{2}}(\Omega)}^2.
			\end{split}
		\end{equation}
		Combining (\ref{error decomposition1})-(\ref{error decomposition3}) yields the result.	
	\end{proof}

The approximation  error $\mathcal{E}_{app}$  describes the expressive power of the parameterized function class $\mathcal{P}$ in $H^2(\Omega)$ and $C(\bar{\Omega})$ norm, which  corresponds to the approximation error   in  FEM  known as the C$\acute{\text{e}}$a's lemma \cite{ciarlet2002finite}.
      The statistical error $\mathcal{E}_{sta}$  is caused by the Monte Carlo discretization of $\mathcal{L}(\cdot)$ defined in \ref{lossp} with $\widehat{\mathcal{L}}(\cdot)$ in \ref{losss}.

	\section{Approximation Error}\label{section approximation error}
We will choose $\mathcal{P}$ as a $\mathrm{ReLU}^3$ networks, to ensure $\mathcal{P}\subset H^2(\Omega)\bigcap C(\bar{\Omega})$. More precisely,
$$\mathcal{P}=\mathcal{N}(\mathcal{D},\mathcal{W},\{\|\cdot\|_{C^2(\bar{\Omega})},\mathcal{B}\},\{\mathrm{ReLU}^3\}),$$
where the hyper-parameters $\mathcal{B} = 2\|u^*\|_{C^2(\bar{\Omega})}$ and $\{\mathcal{D},\mathcal{W}$\} will  be given in later discussions (i.e., Theorem \ref{total error}) to ensure the desired accuracy. The $\mathrm{ReLU}^3$ network is refer to a neural network where the active function $\sigma(x)$ is given by
\begin{equation}\label{eqn:relu3}		
\sigma (x)=\left\{\begin{array}{ll}
				x^3, & x\geq 0, \\
				0, & \text{others}.
			\end{array}\right.
\end{equation}
It can be seen that $\sigma(x)$ is twice differentiable.

\begin{theorem} \label{app error}
Given $\bar u \in C^3(\bar\Omega)$ and for any $\epsilon>0$, there exists a $\mathrm{ReLU}^3$ network $u$ with depth $\lceil\log_2d\rceil+2$ and width $C(d,\|\bar u\|_{C^3(\bar{\Omega})})\left(\frac{1}{\epsilon}\right)^d$ such that
$
\|\bar u-u\|_{C^2(\bar{\Omega})}\leq \epsilon.
$
\end{theorem}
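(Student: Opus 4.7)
The strategy is to combine Corollary \ref{Approximation rate b-splines smooth} with an exact realisation of the quasi-interpolant $Qu^*$ by a network in $\mathcal{N}(\cdot,\cdot,\cdot,\Phi)$. Taking $k=4$, $s=3$, $r=2$ in that Corollary yields
\begin{equation*}
\|u^*-Qu^*\|_{C^2(\bar\Omega)} \leq C(d)\,\ell^{-1}\,\|u^*\|_{C^3(\bar\Omega)},
\end{equation*}
so choosing $\ell = \lceil C(d)\|u^*\|_{C^3(\bar\Omega)}/\epsilon\rceil$ makes this bound $\leq \epsilon$. It then suffices to show that $Qu^* = \sum_{\mathbf{i}\in I^d}\lambda_{\mathbf{i}}(u^*) N_{\ell,\mathbf{i}}^{(4)}$ admits an exact $\mathrm{ReLU}^3$-network representation of depth $\lceil\log_2 d\rceil+2$ and width $O(\ell^d)$; the network $u:=Qu^*$ will then achieve $\|u^*-u\|_{C^2(\bar\Omega)}\leq\epsilon$ in one stroke.

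For the exact representation I would build two elementary gadgets. First, by (\ref{B-spline expression}) every univariate B-spline $N_{\ell,i}^{(4)}(x_j)$ is a finite linear combination of truncated cubics $(x_j-t)_+^3 = \mathrm{ReLU}^3(x_j-t)$ and of monomials $x_j^n$ for $n\leq 2$; the identities
\begin{equation*}
x = \mathrm{ReLU}(x) - \mathrm{ReLU}(-x), \quad x^2 = \mathrm{ReLU}^2(x) + \mathrm{ReLU}^2(-x), \quad x^3 = \mathrm{ReLU}^3(x) - \mathrm{ReLU}^3(-x)
\end{equation*}
express each $N_{\ell,i}^{(4)}$ exactly through a single hidden layer with $O(1)$ units drawn from $\Phi$. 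Second, the polarisation identity $ab=\frac14[(a+b)^2-(a-b)^2]$ combined with $z^2=\mathrm{ReLU}^2(z)+\mathrm{ReLU}^2(-z)$ realises any scalar product in one $\mathrm{ReLU}^2$ layer of width $4$, so a balanced binary tree computes the $d$-fold product $N_{\ell,\mathbf{i}}^{(4)}(\mathbf{x}) = \prod_{j=1}^d N_{\ell,i_j}^{(4)}(x_j)$ in depth $\lceil\log_2 d\rceil$.

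The full network then consists of: (i) one initial $\mathrm{ReLU}^3$ hidden layer evaluating in parallel all univariate B-spline values needed across the $|I|^d$ multi-indices; (ii) $\lceil\log_2 d\rceil$ multiplication layers that output $\{N_{\ell,\mathbf{i}}^{(4)}(\mathbf{x})\}_{\mathbf{i}\in I^d}$; and (iii) a final affine output layer with fixed scalar weights $\lambda_{\mathbf{i}}(u^*)$ assembling $Qu^*(\mathbf{x})$. The total depth is $\lceil\log_2 d\rceil+2$ and the maximum width is $O(d\,|I|^d) = O(d\,\ell^d)$, which is absorbed into $C(d,\|u^*\|_{C^3(\bar\Omega)})(1/\epsilon)^d$.

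The main technical obstacle lies in the multiplication tree: implementing the $d$-fold scalar product in depth exactly $\lceil\log_2 d\rceil$ with activations confined to $\Phi$, and bookkeeping widths so that sharing univariate B-spline evaluations across multi-indices does not inflate the width beyond the claimed $O(\ell^d)$. Once this is settled, the dual functionals $\lambda_{\mathbf{i}}(u^*)$ are numerical constants (independent of $\mathbf{x}$) and may simply be pre-computed and stored as the weights of the last affine layer, so no additional depth is required; the $C^2$ estimate is then inherited from the B-spline bound in Step 1.
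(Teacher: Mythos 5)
Your high-level strategy coincides with the paper's: bound $\|u^*-Qu^*\|_{C^2}$ via Corollary \ref{Approximation rate b-splines smooth} with $k=4$, $s=3$, $r=2$, then realise $Qu^*$ exactly as a network using (\ref{B-spline expression}) for the univariate pieces and a binary multiplication tree of depth $\lceil\log_2 d\rceil$ for the tensor product, storing the $\lambda_{\mathbf i}(u^*)$ in the output affine layer. The depth and width accounting also match. However, there is a genuine gap in the activation bookkeeping. The theorem asserts the existence of a $\mathrm{ReLU}^3$ network, i.e.\ a member of $\mathcal{N}^3$; this restriction is not cosmetic, since the PINNs function class $\mathcal{P}$ in Theorem \ref{total error} is explicitly $\mathrm{ReLU}^3$-only, and the statistical-error machinery (Proposition \ref{DNN derivative}) starts from a pure $\mathrm{ReLU}^3$ network. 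Your gadgets $x=\mathrm{ReLU}(x)-\mathrm{ReLU}(-x)$, $x^2=\mathrm{ReLU}^2(x)+\mathrm{ReLU}^2(-x)$, and the polarisation-plus-$\mathrm{ReLU}^2$ product all introduce $\mathrm{ReLU}$ or $\mathrm{ReLU}^2$ units, so the network you build lies in $\mathcal{N}(\cdot,\cdot,\cdot,\Phi)$ but not in $\mathcal{N}^3$, and hence does not prove the stated claim.

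The paper closes this gap by exploiting nonnegativity. Since $\bar\Omega\subset[0,1]^d$, all arguments are nonnegative, and for $x\ge0$ one has the pure-$\mathrm{ReLU}^3$ identities (with $\sigma=\mathrm{ReLU}^3$)
\begin{equation*}
x^2=-\tfrac{1}{6}\bigl[\sigma(x+2)-4\sigma(x+1)+3\sigma(x)-4\bigr],\quad
x=-\tfrac{1}{12}\bigl[\sigma(x+3)-5\sigma(x+2)+7\sigma(x+1)-3\sigma(x)+6\bigr],
\end{equation*}
which, combined with (\ref{B-spline expression}), give each univariate $N_{\ell,i}^{(4)}$ from a $\mathrm{ReLU}^3$-only layer. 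For the product tree, because $N_{\ell,i}^{(4)}\ge0$ by Proposition \ref{property of b-spline}, the paper uses $xy=\tfrac12\bigl[(x+y)^2-x^2-y^2\bigr]$ with $x+y,\,x,\,y\ge0$, so the same $\mathrm{ReLU}^3$-only square applies to every argument; your polarisation identity $ab=\tfrac14[(a+b)^2-(a-b)^2]$ fails here because $a-b$ may be negative, which is precisely what forces you into $\mathrm{ReLU}^2$. Replacing your two gadgets with these nonnegativity-restricted $\mathrm{ReLU}^3$ formulas is the missing ingredient; the rest of your argument goes through unchanged.
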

	\begin{proof}
A special case in \cite{appreluk}.
	\end{proof}

	\section{Statistical Error}\label{section statistical error}

In this section we give the statistical error with  parameterized function class $\mathcal{P}$, by establishing the Rademacher complexity of the non-Lipschitz composition of $\mathrm{ReLU}^3$ network and its partial derivative. The technique used here may be helpful to analysis the statistical errors for other deep PDEs solvers, where the main difficulties are also to estimated the
Rademacher complexity of non-Lipschitz composition induced by the derivative operator.

Firstly by carefully computation and triangle inequality, we have the following Lemma.
\begin{lemma} \label{L decomposition}
		\begin{equation*}
			\mathbb{E}_{\{{X_k}\}_{k=1}^{N},\{{Y_k}\}_{k=1}^{M}}\sup_{u\in \mathcal{P}}\left|\mathcal{L}(u)-\widehat{\mathcal{L}}(u)\right|	\leq\sum_{j=1}^{13}\mathbb{E}_{\{{X_k}\}_{k=1}^{N},\{{Y_k}\}_{k=1}^{M}}\sup_{u\in \mathcal{P}}\left|\mathcal{L}_j(u)-\widehat{\mathcal{L}_j}(u)\right|
		\end{equation*}
		where
{\small
$$\mathcal{L}_1(u)=|\Omega|\mathbb{E}_{X\sim U(\Omega)}\left(\sum_{i,j=1}^{d}a_{ij}(X)u_{x_ix_j}(X)\right)^2,  \ \ \mathcal{L}_2(u)=|\Omega|\mathbb{E}_{X\sim U(\Omega)}\left(\sum_{i=1}^{d}b_i(X)u_{x_i}(X)\right)^2,$$
$$\mathcal{L}_3(u)=|\Omega|\mathbb{E}_{X\sim U(\Omega)}\left(c(X)u(X)\right)^2, \ \ \mathcal{L}_4(u)=|\Omega|\mathbb{E}_{X\sim U(\Omega)}f(X)^2,$$
  $${\mathcal{L}_5}(u)=-2|\Omega|\mathbb{E}_{X\sim U(\Omega)}\left(\sum_{i,j=1}^{d}a_{ij}(X)u_{x_ix_j}(X)\right)\left(\sum_{i=1}^{d}b_i(X)u_{x_i}(X)\right),$$
		$${\mathcal{L}_6}(u)=-2|\Omega|\mathbb{E}_{X\sim U(\Omega)}\left(\sum_{i,j=1}^{d}a_{ij}(X)u_{x_ix_j}(X)\right)\cdot c(X)u(X),$$
			$${\mathcal{L}_7}(u)=2|\Omega|\mathbb{E}_{X\sim U(\Omega)}\left(\sum_{i,j=1}^{d}a_{ij}(X)u_{x_ix_j}(X)\right)\cdot f(X),$$ $${\mathcal{L}_8}(u)=2|\Omega|\mathbb{E}_{X\sim U(\Omega)}\left(\sum_{i=1}^{d}b_i(X)u_{x_i}(X)\right)\cdot c(X)u(X),$$
            $${\mathcal{L}_9}(u)=-2|\Omega|\mathbb{E}_{X\sim U(\Omega)}\left(\sum_{i=1}^{d}b_i(X)u_{x_i}(X)\right)\cdot f(X),$$ $${\mathcal{L}_{10}}(u)=-2|\Omega|\mathbb{E}_{X\sim U(\Omega)}c(X)u(X)f(X), \ \ \mathcal{L}_{11}(u)=|\partial\Omega|\mathbb{E}_{Y\sim U(\partial\Omega)}\left(e(Y)u(Y)\right)^2,$$
			$$\mathcal{L}_{12}(u)=|\partial\Omega|\mathbb{E}_{Y\sim U(\partial\Omega)}\left(g(Y)\right)^2, \ \ {\mathcal{L}_{13}}(u)=-2|\partial\Omega|\mathbb{E}_{Y\sim U(\partial\Omega)}e(Y)u(Y)g(Y),$$}
and $\widehat{\mathcal{L}}_j(u)$ is the discrete sample  version of $\mathcal{L}_j(u)$ by replacing expectation with sample average $j=1,...13$.
\end{lemma}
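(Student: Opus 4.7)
The plan is to realize the $13$-term splitting as a pure algebraic expansion of the squares appearing in $\mathcal{L}(u)$ and $\widehat{\mathcal{L}}(u)$, followed by the triangle inequality, the subadditivity of $\sup$, and the linearity of expectation. No analytic machinery beyond these three steps is required.

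First I would expand the interior square in \eqref{lossp}. Write the integrand as $(A+B+C+D)^2$ with $A=-\sum_{i,j}a_{ij}u_{x_ix_j}$, $B=\sum_i b_i u_{x_i}$, $C=cu$, $D=-f$. This produces the four diagonal squares $A^2,B^2,C^2,D^2$ and the six cross terms $2AB,2AC,2AD,2BC,2BD,2CD$, giving exactly the ten pieces $\mathcal{L}_1,\dots,\mathcal{L}_{10}$ listed in the statement (with the signs produced by $A=-\sum a_{ij}u_{x_ix_j}$ and $D=-f$ absorbed into the constants). Similarly, expanding $(eu-g)^2=(eu)^2-2(eu)g+g^2$ yields the three boundary pieces $\mathcal{L}_{11},\mathcal{L}_{12},\mathcal{L}_{13}$. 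Thus $\mathcal{L}(u)=\sum_{j=1}^{13}\mathcal{L}_j(u)$ by linearity of expectation.

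Next I would perform the identical algebraic expansion on $\widehat{\mathcal{L}}(u)$ in \eqref{losss}. Because the sample analogues use the same pointwise integrands (just with the expectation replaced by empirical averages over $\{X_k\}$ and $\{Y_k\}$), the expansion is termwise identical, so $\widehat{\mathcal{L}}(u)=\sum_{j=1}^{13}\widehat{\mathcal{L}}_j(u)$, where each $\widehat{\mathcal{L}}_j$ is the empirical counterpart of $\mathcal{L}_j$. Subtracting yields
\begin{equation*}
\mathcal{L}(u)-\widehat{\mathcal{L}}(u)=\sum_{j=1}^{13}\bigl(\mathcal{L}_j(u)-\widehat{\mathcal{L}}_j(u)\bigr).
\end{equation*}

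Finally I would apply the triangle inequality to the absolute value, then use $\sup_u\sum_j|\cdot|\le\sum_j\sup_u|\cdot|$ (subadditivity of $\sup$ over a finite sum), and finally the linearity/monotonicity of the expectation over the i.i.d.\ samples $\{X_k\}_{k=1}^N$ and $\{Y_k\}_{k=1}^M$. Concatenating these three elementary inequalities delivers
\begin{equation*}
\mathbb{E}\sup_{u\in\mathcal{N}^3}\bigl|\mathcal{L}(u)-\widehat{\mathcal{L}}(u)\bigr|
\le \sum_{j=1}^{13}\mathbb{E}\sup_{u\in\mathcal{N}^3}\bigl|\mathcal{L}_j(u)-\widehat{\mathcal{L}}_j(u)\bigr|,
\end{equation*}
which is the desired bound. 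There is no genuine obstacle here; the only point to double-check is bookkeeping of signs in the cross terms so that the ten interior pieces agree verbatim with $\mathcal{L}_1,\dots,\mathcal{L}_{10}$ as stated. The substantive work (bounding each $\mathbb{E}\sup_u|\mathcal{L}_j-\widehat{\mathcal{L}}_j|$ via Rademacher complexity of non-Lipschitz compositions of derivatives of $\mathrm{ReLU}^3$ networks) is postponed to the subsequent lemmas.
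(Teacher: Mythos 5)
Your proposal is correct and matches the paper's approach exactly: the paper's own proof is simply ``Direct consequence of triangle inequality,'' and your expansion of the squares into $13$ terms followed by the triangle inequality, subadditivity of $\sup$, and linearity of expectation is precisely that argument made explicit.
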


\subsection{Rademacher complexity, Covering Number and Pseudo-dimension}
	By the technique of symmetrization, we can bound the difference between continuous loss $\mathcal{L}_i$ and empirical loss $\widehat{\mathcal{{L}}}_i$ via Rademacher complexity.
	\begin{definition}\cite{wellner}
		The Rademacher complexity of a set $A \subseteq \mathbb{R}^N$ is defined by
		\begin{equation*}
			\mathfrak{R}(A) = \mathbb{E}_{\{\sigma_i\}_{k=1}^N}\left[\sup_{a\in A}\frac{1}{N}\sum_{k=1}^{N} \sigma_k a_k\right]
		\end{equation*}
		where,   $\{\sigma_k\}_{k=1}^N$ are $N$ i.i.d  Rademacher variables with $\mathbb{P}(\sigma_k = 1) = \mathbb{P}(\sigma_k = -1) = \frac{1}{2}.$
		
		Let $\Omega$ be a set and $\mathcal{F}$ be a function class which maps $\Omega$ to $\mathbb{R}$. Let $P$ be a probability distribution over $\Omega$ and $\{X_k\}_{k=1}^{N}$ be i.i.d. samples from $P$. The Rademacher complexity of $\mathcal{F}$ associated with distribution $P$ and sample size $N$ is defined  by
		\begin{equation*}
			\mathfrak{R}_{P,N}(\mathcal{F}) = \mathbb{E}_{\{X_k,\sigma_k\}_{k=1}^{N}}\left[\sup_{u\in \mathcal{F}}\frac{1}{N}\sum_{k=1}^N \sigma_k u(X_k)\right].
		\end{equation*}
	\end{definition}

For Rademacher complexity, we have following structural result.
\begin{lemma} \label{structural result of Rademacher}
Let $\Omega$ be a set and $P$ be a probability distribution over $\Omega$. Let $N\in\mathbb{N}$. Assume that $w:\Omega\to\mathbb{R}$ and $|w(x)|\leq\mathcal{B}$ for all $x\in\Omega$, then for any function class $\mathcal{F}$ mapping $\Omega$ to $\mathbb{R}$, there holds
\begin{equation*}
\mathfrak{R}_{P,N}(w(x)\mathcal{F})\leq\mathcal{B}\mathfrak{R}_{P,N}(\mathcal{F}),
\end{equation*}	
where
$
w(x)\mathcal{F}:=\{\bar{u}:\bar{u}(x)=w(x)u(x),u\in\mathcal{F}\}.$
\end{lemma}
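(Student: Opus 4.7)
The plan is to reduce the lemma, by conditioning on the random samples, to a comparison principle for Rademacher averages and then prove that principle by a short convexity argument.

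First I would expand the definition and condition on $\{X_k\}_{k=1}^N$. Setting $\alpha_k := w(X_k)/\mathcal{B}\in[-1,1]$ and pulling the factor $\mathcal{B}$ outside the supremum gives
\[
\mathfrak{R}(w(x)\mathcal{F}) = \mathcal{B}\,\mathbb{E}_{\{X_k\},\sigma}\sup_{u\in\mathcal{F}} \frac{1}{N}\sum_{k=1}^N \alpha_k \sigma_k u(X_k),
\]
so the lemma is equivalent to the comparison inequality
\[
\mathbb{E}_\sigma \sup_{u\in\mathcal{F}}\sum_{k=1}^N \alpha_k \sigma_k u(X_k) \;\leq\; \mathbb{E}_\sigma \sup_{u\in\mathcal{F}}\sum_{k=1}^N \sigma_k u(X_k)
\]
holding for every $(\alpha_1,\ldots,\alpha_N)\in[-1,1]^N$ and every realization of $\{X_k\}$.

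To establish it I would argue coordinate by coordinate. Fix all $\sigma_j,\alpha_j$ with $j\neq k$, write $R(u) := \sum_{j\neq k}\alpha_j\sigma_j u(X_j)$, and average out $\sigma_k$ to obtain
\[
g(\alpha) := \mathbb{E}_{\sigma_k}\sup_{u\in\mathcal{F}}[\alpha\sigma_k u(X_k) + R(u)] = \tfrac12\sup_{u\in\mathcal{F}}[\alpha u(X_k) + R(u)] + \tfrac12\sup_{u\in\mathcal{F}}[-\alpha u(X_k) + R(u)].
\]
Each supremum is an upper envelope of affine functions of $\alpha$, hence convex in $\alpha$; so $g$ is convex, and evidently $g(-\alpha) = g(\alpha)$. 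A convex even function on $\mathbb{R}$ is nondecreasing in $|\alpha|$, which yields $g(\alpha_k) \leq g(1)$ whenever $|\alpha_k|\leq 1$. Iterating the replacement $\alpha_k \mapsto 1$ over $k=1,\ldots,N$ removes every multiplier and gives the desired inequality.

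The only substantive point is the convexity-plus-evenness observation for $g$; everything else is definition unwinding and iteration. Equivalently, I could cite the Ledoux--Talagrand contraction principle applied to the linear $\mathcal{B}$-Lipschitz maps $t \mapsto w(X_k)t$ (which vanish at $0$), but its proof rests on essentially the same convexity argument, so I expect no real obstacle beyond this one clean observation.
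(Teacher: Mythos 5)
Your proposal is correct and sits on the same chassis as the paper's proof: both condition on the samples and perform a coordinate-by-coordinate replacement of the multiplier $w(X_k)$ (equivalently $\alpha_k$) by its extremal value, iterating until all multipliers are gone. The difference is purely in how the single-coordinate replacement is justified. The paper averages out $\sigma_1$, rewrites the sum of two suprema as a supremum over pairs $(u,u')$, bounds $w(X_1)[u(X_1)-u'(X_1)]\le\mathcal{B}|u(X_1)-u'(X_1)|$, and then drops the absolute value by exploiting the $u\leftrightarrow u'$ relabeling symmetry before reassembling the Rademacher variable. You instead observe that $g(\alpha)=\mathbb{E}_{\sigma_k}\sup_u[\alpha\sigma_k u(X_k)+R(u)]$ is a convex even function of $\alpha$, hence nondecreasing in $|\alpha|$, so $g(\alpha_k)\le g(1)$. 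These are the two standard routes to the Ledoux--Talagrand contraction step; your convexity argument is arguably cleaner because it avoids the absolute-value manipulation and the double-supremum bookkeeping, while the paper's route is more elementary in the sense of never invoking convexity. Either way, the key structural idea — reduce to one coordinate at a time and use symmetry of the Rademacher variable — is identical, and your proof is complete.
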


\begin{proof}
{\small \begin{equation*}
\begin{aligned}
			&\mathfrak{R}_{P,N}(w(x)\cdot\mathcal{F})
			=\frac{1}{N}\mathbb{E}_{\{X_k,\sigma_k\}_{k=1}^{N}}\sup_{u\in\mathcal{F}}\sum_{k=1}^{N}\sigma_kw(X_k)u(X_k)\\
			&=\frac{1}{2N}\mathbb{E}_{\{X_k\}_{k=1}^{N}}\mathbb{E}_{\{\sigma_k\}_{k=2}^{N}}\sup_{u\in\mathcal{F}}\left[w(X_1)u(X_1)+\sum_{k=2}^{N}\sigma_kw(X_k)u(X_k)\right]\\
			&\quad\ +\frac{1}{2N}\mathbb{E}_{\{X_k\}_{k=1}^{N}}\mathbb{E}_{\{\sigma_k\}_{k=2}^{N}}\sup_{u\in\mathcal{F}}\left[-w(X_1)u(X_1)+\sum_{k=2}^{N}\sigma_kw(X_k)u(X_k)\right]\\
			&=\frac{1}{2N}\mathbb{E}_{\{X_k\}_{k=1}^{N}}\mathbb{E}_{\{\sigma_k\}_{k=2}^{N}}\\
			&\quad\sup_{u,u'\in\mathcal{F}}\left[w(X_1)[u(X_1)-u'(X_1)]+\sum_{k=2}^{N}\sigma_kw(X_k)u(X_k)+\sum_{k=2}^{N}\sigma_kw(X_k)u'(X_k)\right]\\
			&\leq\frac{1}{2N}\mathbb{E}_{\{X_k\}_{k=1}^{N}}\mathbb{E}_{\{\sigma_k\}_{k=2}^{N}}\\
			&\quad\sup_{u,u'\in\mathcal{F}}\left[\mathcal{B}|u(X_1)-u'(X_1)|+\sum_{k=2}^{N}\sigma_kw(X_k)u(X_k)+\sum_{k=2}^{N}\sigma_kw(X_k)u'(X_k)\right]\\
			&=\frac{1}{2N}\mathbb{E}_{\{X_k\}_{k=1}^{N}}\mathbb{E}_{\{\sigma_k\}_{k=2}^{N}}\\
			&\quad\sup_{u,u'\in\mathcal{F}}\left[\mathcal{B}[u(X_1)-u'(X_1)]+\sum_{k=2}^{N}\sigma_kw(X_k)u(X_k)+\sum_{k=2}^{N}\sigma_kw(X_k)u'(X_k)\right]\\
			&=\frac{1}{N}\mathbb{E}_{\{X_k,\sigma_k\}_{k=1}^{N}}\sup_{u\in\mathcal{F}}\left[\sigma_1\mathcal{B}u(X_1)+\sum_{k=2}^{N}\sigma_kw(X_k)u(X_k)\right]\\
			&\leq\cdots\leq\frac{\mathcal{B}}{N}\mathbb{E}_{\{X_k,\sigma_k\}_{k=1}^{N}}\sup_{u\in\mathcal{F}}\sum_{k=1}^{N}\sigma_ku(X_k)=\mathcal{B}\mathfrak{R}_{P,N}(\mathcal{F})
\end{aligned}
\end{equation*}}
\end{proof}

\begin{lemma} \label{sym}
Let $\{X_k\}_{k=1}^N$ be i.i.d. samples from $U(\Omega)$, then we have
\begin{equation*}
			\mathbb{E}_{\{{X_k}\}_{k=1}^{N}}\sup_{u\in{\mathcal{P}}}\left|\mathcal{L}_j(u)-\widehat{\mathcal{L}_j}(u)\right|
			\leq C_d \mathfrak{Z}\mathfrak{R}_{U(\Omega),N}(\mathcal{F}_j), \ \ j= 1,...,13,
\end{equation*}
where
$$
\mathcal{F}_1=\{{\pm} f:\Omega\to\mathbb{R}\ |\ \exists {u\in{\mathcal{P}}} \ \ \mathrm{and} \ \ 1\leq i,j,i',j'\leq d \quad \mathrm{s.t.}\quad
f(x)=u_{x_ix_j}(x)u_{x_{i'}x_{j'}}(x)\}, $$
$$ \mathcal{F}_2=\{{\pm} f:\Omega\to\mathbb{R}\ |\ \exists {u\in{\mathcal{P}}} \  \  \mathrm{ and } \ \ 1\leq i,i'\leq d \quad \mathrm{s.t.} \quad
f(x)=u_{x_i}(x)u_{x_{i'}}(x)\},$$
$$\mathcal{F}_3=\{{\pm} f:\Omega\to\mathbb{R}\ |\ \exists {u\in{\mathcal{P}}} \quad \mathrm{s.t.} \quad
f(x)=u(x)^2\}, \ \ \mathcal{F}_4=\{{\pm} f:\Omega\to\mathbb{R}\ |\ -1,0,1\},$$
$$\mathcal{F}_5=\{{\pm} f:\Omega\to\mathbb{R}\ |\ \exists {u\in{\mathcal{P}}} \ \ \mathrm{ and } \ \ 1\leq i,j,i'\leq d \quad \mathrm{s.t.}\quad
f(x)=u_{x_ix_j}(x)u_{x_{i'}}(x)\},$$
$$\mathcal{F}_6=\{{\pm} f:\Omega\to\mathbb{R}\ |\ \exists {u\in{\mathcal{P}}} \ \ \mathrm{ and } \ \ 1\leq i,j\leq d \quad \mathrm{s.t.} \quad
f(x)=u_{x_ix_j}(x)u(x)\},$$
$$\mathcal{F}_7=\{{\pm} f:\Omega\to\mathbb{R}\ |\ \exists {u\in{\mathcal{P}}} \ \ \mathrm{ and }\ \  1\leq i,j\leq d \quad \mathrm{s.t.} \quad
f(x)=u_{x_ix_j}(x)\}$$
$$\mathcal{F}_8=\{{\pm} f:\Omega\to\mathbb{R}\ |\ \exists {u\in{\mathcal{P}}} \ \ \mathrm{ and }\ \  1\leq i\leq d \quad  \mathrm{s.t.} \quad
f(x)=u_{x_i}(x)u(x)\}$$
$$\mathcal{F}_9=\{{\pm} f:\Omega\to\mathbb{R}\ |\ \exists {u\in{\mathcal{P}}} \ \  \mathrm{ and } \ \  1\leq i\leq d \quad \mathrm{s.t.} \quad
f(x)=u_{x_i}(x)\},$$
$$ \mathcal{F}_{10}=\{{\pm f}:\Omega\to\mathbb{R}\ |\ \exists u\in\mathcal{P} \quad \mathrm{s.t.} \quad f(x)=u(x)\}, $$ $$\mathcal{F}_{11}={\{{\pm f}:\partial\Omega\to\mathbb{R}\ |\ \exists {u\in{\mathcal{P}}} \quad \mathrm{s.t.} \quad
f=u^2|_{\partial\Omega}\}},$$
$$\mathcal{F}_{12}=\{{\pm} f:\partial\Omega\to\mathbb{R}\ |\ -1,0,1\}, $$
$$\mathcal{F}_{13}={\{\pm f:\partial\Omega\to\mathbb{R}\ |\ \exists u\in\mathcal{P}\quad\mathrm{s.t.}\quad f=u|_{\partial\Omega}\}}$$


\end{lemma}
\begin{proof}
We only give the proof of $\mathbb{E}_{\{{X_k}\}_{k=1}^{N}}\sup_{u\in{\mathcal{P}}}\left|\mathcal{L}_1(u)-\widehat{\mathcal{L}_1}(u)\right|\leq4{|\Omega|}\mathfrak{A}^2d^4\mathfrak{R}(\mathcal{F}_1)$ since other inequalities can be shown similarly. We take $\{\widetilde{X_k}\}_{k=1}^{N}$ as an independent copy of $\{{X_k}\}_{k=1}^{N}$, then
{\small
\begin{align*}
&\left|\mathcal{L}_1(u)-\widehat{\mathcal{L}_1}(u)\right|=|\Omega|\left|\mathbb{E}_{X\sim U(\Omega)}\left(\sum_{i,j=1}^{d}a_{ij}(X)u_{x_ix_j}(X)\right)^2
-\frac{1}{N}\sum_{k=1}^{N}\left(\sum_{i,j=1}^{d}a_{ij}(X_k)u_{x_ix_j}(X_k)\right)^2\right|\\		&=|\Omega|\left|\mathbb{E}_{\{\widetilde{X_k}\}_{k=1}^{N}}\frac{1}{N}\sum_{k=1}^{N}\left(\sum_{i,j=1}^{d}a_{ij}(\widetilde{X_k})u_{x_ix_j}(\widetilde{X_k})\right)^2
-\frac{1}{N}\sum_{k=1}^{N}\left(\sum_{i,j=1}^{d}a_{ij}(X_k)u_{x_ix_j}(X_k)\right)^2\right|\\
&\leq{\frac{|\Omega|}{N}}\mathbb{E}_{\{\widetilde{X_k}\}_{k=1}^{N}}\sum_{i,i',j,j'=1}^{d} |\sum_{k=1}^{N}a_{ij}(\widetilde{X_k})a_{i'j'}(\widetilde{X_k})u_{x_ix_j}(\widetilde{X_k})u_{x_{i'}x_{j'}}(\widetilde{X_k})\\
&-a_{ij}({X_k})a_{i'j'}({X_k})u_{x_ix_j}({X_k})u_{x_{i'}x_{j'}}({X_k})|.
\end{align*}}
Hence
{\small
\begin{align*}
&\mathbb{E}_{\{{X_k}\}_{k=1}^{N}}\sup_{u\in{\mathcal{P}}}\left|\mathcal{L}_1(u)-\widehat{\mathcal{L}_1}(u)\right|\\
&\leq{\frac{|\Omega|}{N}}\mathbb{E}_{\{{X_k}\}_{k=1}^{N}}\sup_{u\in{\mathcal{P}}}\mathbb{E}_{\{\widetilde{X_k}\}_{k=1}^{N}}\sum_{i,i',j,j'=1}^{d}\\
&\quad\ \left|\sum_{k=1}^{N}a_{ij}(\widetilde{X_k})a_{i'j'}(\widetilde{X_k})u_{x_ix_j}(\widetilde{X_k})u_{x_{i'}x_{j'}}(\widetilde{X_k})-a_{ij}({X_k})a_{i'j'}({X_k})u_{x_ix_j}({X_k})u_{x_{i'}x_{j'}}({X_k})\right|\\
&\leq{\frac{|\Omega|}{N}}\mathbb{E}_{\{{X_k}\}_{k=1}^{N}}\mathbb{E}_{\{\widetilde{X_k}\}_{k=1}^{N}}\sup_{u\in{\mathcal{P}}}\sum_{i,i',j,j'=1}^{d}\\
&\quad\ \left|\sum_{k=1}^{N}a_{ij}(\widetilde{X_k})a_{i'j'}(\widetilde{X_k})u_{x_ix_j}(\widetilde{X_k})u_{x_{i'}x_{j'}}(\widetilde{X_k})-a_{ij}({X_k})a_{i'j'}({X_k})u_{x_ix_j}({X_k})u_{x_{i'}x_{j'}}({X_k})\right|\\
&={\frac{|\Omega|}{N}}\mathbb{E}_{\{{X_k}\}_{k=1}^{N}}\mathbb{E}_{\{\widetilde{X_k}\}_{k=1}^{N}}\sup_{u\in{\mathcal{P}}}\sum_{i,i',j,j'=1}^{d}\mathbb{E}_{\{{\sigma_k}\}_{k=1}^{N}}\\
&\quad\ \left|\sum_{k=1}^{N}\sigma_k\left[a_{ij}(\widetilde{X_k})a_{i'j'}(\widetilde{X_k})u_{x_ix_j}(\widetilde{X_k})u_{x_{i'}x_{j'}}(\widetilde{X_k})-a_{ij}({X_k})a_{i'j'}({X_k})u_{x_ix_j}({X_k})u_{x_{i'}x_{j'}}({X_k})\right]\right|\\
&\leq{\frac{|\Omega|}{N}}\mathbb{E}_{\{{X_k},\widetilde{X_k},{\sigma_k}\}_{k=1}^{N}}\sup_{u\in{\mathcal{P}}}\sum_{i,i',j,j'=1}^{d}\left|\sum_{k=1}^{N}\sigma_k\left[a_{ij}(\widetilde{X_k})a_{i'j'}(\widetilde{X_k})u_{x_ix_j}(\widetilde{X_k})u_{x_{i'}x_{j'}}(\widetilde{X_k})\right]\right|\\
&\quad+{\frac{|\Omega|}{N}}\mathbb{E}_{\{{X_k},\widetilde{X_k},{\sigma_k}\}_{k=1}^{N}}\sup_{u\in{\mathcal{P}}}\sum_{i,i',j,j'=1}^{d}\left|\sum_{k=1}^{N}\sigma_k\left[a_{ij}({X_k})a_{i'j'}({X_k})u_{x_ix_j}({X_k})u_{x_{i'}x_{j'}}({X_k})\right]\right|\\
&={\frac{2|\Omega|}{N}}\mathbb{E}_{\{{X_k},{\sigma_k}\}_{k=1}^{N}}\sup_{u\in{\mathcal{P}}}\sum_{i,i',j,j'=1}^{d}\left|\sum_{k=1}^{N}\sigma_k\left[a_{ij}({X_k})a_{i'j'}({X_k})u_{x_ix_j}({X_k})u_{x_{i'}x_{j'}}({X_k})\right]\right|\\
&\leq{\frac{2|\Omega|}{N}}\sum_{i,i',j,j'=1}^{d}\mathbb{E}_{\{{X_k},{\sigma_k}\}_{k=1}^{N}}\sup_{u\in{\mathcal{P}}}\left|\sum_{k=1}^{N}\sigma_k\left[a_{ij}({X_k})a_{i'j'}({X_k})u_{x_ix_j}({X_k})u_{x_{i'}x_{j'}}({X_k})\right]\right|\\
&={\frac{2|\Omega|}{N}}\sum_{i,i',j,j'=1}^{d}\mathbb{E}_{\{{X_k},{\sigma_k}\}_{k=1}^{N}}\sup_{{f_{i,i',j,j'}\in\mathcal{F}_1}}\sum_{k=1}^{N}\sigma_k\left[a_{ij}({X_k})a_{i'j'}({X_k}){f_{i,i',j,j'}({X_k})}\right]\\
&\leq{\frac{2|\Omega|\mathfrak{A}^2}{N}}\sum_{i,i',j,j'=1}^{d}\mathbb{E}_{\{{X_k},{\sigma_k}\}_{k=1}^{N}}\sup_{{f_{i,i',j,j'}\in\mathcal{F}_1}}\sum_{k=1}^{N}\sigma_k{f_{i,i',j,j'}({X_k})}\\
&\leq{2}|\Omega|\mathfrak{A}^2d^4\mathfrak{R}_{{U(\Omega),N}}(\mathcal{F}_1)	
\end{align*}}
where the second and eighth steps are from Jensen's inequality and Lemma \ref{structural result of Rademacher}, the third and seventh steps are due to the facts that the insertion of Rademacher variables doesn't change the distribution and that {$\mathcal{F}_1$ is symmetric(i.e., if $f\in\mathcal{F}_1$, then $-f\in\mathcal{F}_1$)}, respectively.
\end{proof}

Next we give an upper bound of Rademacher complexity in terms of the covering number of the corresponding function class.
\begin{definition}\cite{anthony2009neural}
Suppose that $W\subset\mathbb{R}^n$. For any $\epsilon>0$, let $V\subset\mathbb{R}^n$ be an $\epsilon$-cover of $W$ with respect to the distance  $d_{\infty}$, that is, for any $w\in W$, there exists a $v\in V$ such that $d_{\infty}(u,v)<\epsilon$, where $d_{\infty}$ is defined by
$d_{\infty}(u,v):={\max_{1\leq i\leq n}|u_i-v_i|}$. The covering number $\mathcal{C}(\epsilon,W,d_{\infty})$ is defined to be the minimum cardinality among all $\epsilon$-cover of $W$ with respect to the distance  $d_{\infty}$.
\end{definition}
	
\begin{definition}\cite{anthony2009neural}
Suppose that $\mathcal{F}$ is a class of functions from $\Omega$ to $\mathbb{R}$. Given $n$  sample  $\mathbf{Z}_n=(Z_1,Z_2,\cdots,Z_n) \in \Omega^n$, $\mathcal{F}|_{\mathbf{Z}_n}\subset\mathbb{R}^n$ is defined by
\begin{equation*}
\mathcal{F}|_{\mathbf{Z}_n}= \{(u(Z_1),u(Z_2),\cdots, u(Z_n)): u\in\mathcal{N}^{3}\}.
\end{equation*}
The uniform covering number $\mathcal{C}_{\infty}(\epsilon,\mathcal{F},n)$ is defined by
\begin{equation*}
\mathcal{C}_{\infty}(\epsilon,\mathcal{F},n)=\max_{\mathbf{Z}_n\in\Omega^n}\mathcal{C}(\epsilon,\mathcal{F}|_{\mathbf{Z}_n},d_{\infty}).
\end{equation*}
\end{definition}
	
\begin{lemma} \label{chaining}
{Let $\Omega$ be a set and $P$ be a probability distribution over $\Omega$. Let $N\in\mathbb{N}_{\geq1}$.} Let $\mathcal{F}$ be a class of functions from $\Omega$ to $\mathbb{R}$ such that $0\in \mathcal{F}$ and the diameter of $\mathcal{F}$ is less than $\mathcal{B}$, i.e., $\|u\|_{L^\infty(\Omega)} \leq \mathcal{B}, \forall u \in \mathcal{F}$.  Then
$$\mathfrak{R}_{P,N}(\mathcal{F}) \leq  \inf_{0<\delta<\mathcal{B}}\left(4 \delta+\frac{12}{\sqrt{N}} \int_{\delta}^{\mathcal{B}} \sqrt{\log (2\mathcal{C}_{\infty}\left( \epsilon, \mathcal{F}, N\right))}\mathrm{d}\epsilon\right).$$
\end{lemma}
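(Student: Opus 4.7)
The plan is to carry out Dudley's chaining argument. First I would condition on the sample $\mathbf{Z}_N=(X_1,\dots,X_N)$ and work inside the finite-dimensional set $\mathcal{F}|_{\mathbf{Z}_N}\subset\mathbb{R}^N$, so that it suffices to bound $\tfrac{1}{N}\mathbb{E}_\sigma\sup_{\mathbf{a}\in\mathcal{F}|_{\mathbf{Z}_N}}\sum_{k=1}^{N}\sigma_k a_k$ uniformly over the sample. For each integer $j\geq 0$, set the dyadic scale $\epsilon_j=2^{-j}\mathcal{B}$ and let $V_j$ be a minimal $\epsilon_j$-cover of $\mathcal{F}|_{\mathbf{Z}_N}$ in $d_\infty$, so that $|V_j|\leq\mathcal{C}_\infty(\epsilon_j,\mathcal{F},N)$. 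For every $\mathbf{a}\in\mathcal{F}|_{\mathbf{Z}_N}$, choose $\pi_j(\mathbf{a})\in V_j$ with $\|\mathbf{a}-\pi_j(\mathbf{a})\|_\infty\leq\epsilon_j$, and fix $\pi_0(\mathbf{a})=\mathbf{0}$, which is admissible because $0\in\mathcal{F}$ and $\|\mathbf{a}\|_\infty\leq\mathcal{B}=\epsilon_0$.

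Given $\delta\in(0,\mathcal{B})$, let $J$ be the smallest integer with $\epsilon_J<\delta$ and use the telescoping identity
\begin{equation*}
\mathbf{a}=\bigl(\mathbf{a}-\pi_J(\mathbf{a})\bigr)+\sum_{j=1}^{J}\bigl(\pi_j(\mathbf{a})-\pi_{j-1}(\mathbf{a})\bigr).
\end{equation*}
The tail satisfies $\|\mathbf{a}-\pi_J(\mathbf{a})\|_\infty\leq\epsilon_J<\delta$, so its contribution to the Rademacher average is at most $\delta$, and after allowing factor-$2$ slack from $\epsilon_J\leq 2\delta$ one obtains the $4\delta$ term. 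For each link, $\|\pi_j(\mathbf{a})-\pi_{j-1}(\mathbf{a})\|_\infty\leq\epsilon_j+\epsilon_{j-1}=3\epsilon_j$, hence $\|\pi_j(\mathbf{a})-\pi_{j-1}(\mathbf{a})\|_2\leq 3\sqrt{N}\,\epsilon_j$, and the number of distinct link vectors is bounded by $|V_j|\cdot|V_{j-1}|\leq\mathcal{C}_\infty(\epsilon_j,\mathcal{F},N)^{2}$.

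Next I would apply Massart's finite-class lemma to each family of link vectors to obtain a per-link bound of the form $3\epsilon_j\sqrt{2\log(|V_j|\cdot|V_{j-1}|)}/\sqrt{N}\leq C\,\epsilon_j\sqrt{\log(2\mathcal{C}_\infty(\epsilon_j,\mathcal{F},N))}/\sqrt{N}$. Summing over $j=1,\dots,J$ and exploiting the monotonicity of $\epsilon\mapsto\sqrt{\log(2\mathcal{C}_\infty(\epsilon,\mathcal{F},N))}$ together with the spacing $\epsilon_{j-1}-\epsilon_j=\epsilon_j$, the dyadic sum is majorized by the Dudley integral $\int_\delta^{\mathcal{B}}\sqrt{\log(2\mathcal{C}_\infty(\epsilon,\mathcal{F},N))}\,d\epsilon$ up to an absolute constant. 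Combining the tail with the chained terms, taking expectation over $\mathbf{Z}_N$ (the bound is pointwise uniform in the sample), and finally minimizing over $\delta\in(0,\mathcal{B})$ yields the asserted inequality.

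The main obstacle is purely arithmetic: pinning down the constants $4$ and $12$ requires carefully tracking the factors of $2$ that arise from bounding $\epsilon_J\leq 2\delta$, from the crude $3\epsilon_j$ bound on link vectors, from Massart's $\sqrt{2\log|A|}$, and from replacing $|V_j|\cdot|V_{j-1}|$ by a single $2\mathcal{C}_\infty(\epsilon_j,\mathcal{F},N)$ inside the logarithm. Conceptually there is no new ingredient beyond the standard Dudley chaining scheme, so I expect the proof to proceed without any deeper difficulty.
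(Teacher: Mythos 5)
Your proposal is the standard Dudley chaining argument, which is exactly what the paper invokes (it gives no proof of its own, citing only "the chaining method, see [wellner]"). The ingredients are all correct, and the only loose end you flag yourself — tracking the constants $4$ and $12$ — is resolved by choosing $J$ as the smallest integer with $\epsilon_J<4\delta$ (so $\epsilon_{J+1}\geq\delta$ and the tail is at most $4\delta$) rather than the smallest with $\epsilon_J<\delta$, so that the dyadic sum is dominated by $\int_\delta^{\mathcal{B}}$ instead of an integral reaching down to $\epsilon_{J+1}\geq\delta/4$.
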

\begin{proof}
The proof is based on the chaining method, see  \cite{wellner}.
\end{proof}
	
By Lemma \ref{chaining}, we have to
bound the  covering number, which can be  upper bounded via Pseudo-dimension \cite{anthony2009neural}.
	
	\begin{definition}
		Let $\mathcal{F}$ be a class of functions from $X$ to $\mathbb{R}$. Suppose that $S=\{x_1,x_2,\cdots,x_n\}\subset X$. We say that $S$ is pseudo-shattered by $\mathcal{F}$ if there exists $y_1,y_2,\cdots,y_n$ such that for any $b\in\{0,1\}^n$, there exists a $u\in\mathcal{F}$ satisfying
		\begin{equation*}
			\mathrm{sign}(u(x_i)-y_i)=b_i,\quad i=1,2,\dots,n
		\end{equation*}
		and we say that $\{y_i\}_{i=1}^{n}$ witnesses the shattering.
		The pseudo-dimension of $\mathcal{F}$, denoted as $\mathrm{Pdim}(\mathcal{F})$, is defined to be the maximum cardinality among all sets pseudo-shattered by $\mathcal{F}$.
	\end{definition}
	
	The following proposition showing a relation between uniform covering number and pseudo-dimension. 
	\begin{proposition}[Theorem 12.2 \cite{anthony2009neural}]\label{covering number pdim}
		Let $\mathcal{F}$ be a class of real functions from a domain $X$ to the bounded interval $[0, \mathcal{B}]$. Let $\epsilon>0$. Then
		\begin{equation*}
			\mathcal{C}_{\infty}(\epsilon, \mathcal{F}, n) \leq \sum_{i=1}^{\mathrm{Pdim}(\mathcal{F})}\left(\begin{array}{c}
				n \\
				i
			\end{array}\right)\left(\frac{\mathcal{B}}{\epsilon}\right)^{i},
		\end{equation*}
		which is less than $\left(\frac{en\mathcal{B}}{\epsilon\cdot\mathrm{Pdim}(\mathcal{F})}\right)^{\mathrm{Pdim}(\mathcal{F})}$ for $n\geq\mathrm{Pdim}(\mathcal{F})$.
	\end{proposition}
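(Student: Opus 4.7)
The plan is to reduce the covering-number bound to a combinatorial shattering count on a discretized grid, in the spirit of Haussler/Pollard. Fix any sample $\mathbf{Z}_n=(Z_1,\ldots,Z_n)\in\Omega^n$ and let $K=\lceil\mathcal{B}/\epsilon\rceil$. The first step is a quantization: define $\tau:\mathcal{F}\to\{0,1,\ldots,K-1\}^n$ by $\tau(f)_k=\lfloor f(Z_k)/\epsilon\rfloor$. Any two functions with $\tau(f)=\tau(g)$ satisfy $|f(Z_k)-g(Z_k)|<\epsilon$ for every $k$, so picking one representative from each fiber of $\tau$ yields an $\epsilon$-cover of $\mathcal{F}|_{\mathbf{Z}_n}$ in $d_\infty$, and thus $\mathcal{C}(\epsilon,\mathcal{F}|_{\mathbf{Z}_n},d_\infty)\leq|\tau(\mathcal{F})|$. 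Hence it suffices to count quantization patterns.

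Next I would expand each coordinate $\tau(f)_k$ into its level indicators $\sigma(f)_{k,j}=\mathbf{1}[f(Z_k)\geq j\epsilon]$ for $k\in\{1,\ldots,n\}$ and $j\in\{1,\ldots,K-1\}$; clearly $\tau(f)$ is recoverable from $\sigma(f)$, so $|\tau(\mathcal{F})|\leq|\sigma(\mathcal{F})|$. The key observation is that pseudo-shattering of $\mathcal{F}$ is exactly VC-shattering of the subgraph class $\mathcal{H}=\{(x,y)\mapsto\mathbf{1}[f(x)\geq y]:f\in\mathcal{F}\}$, so $\mathrm{VCdim}(\mathcal{H})=\mathrm{Pdim}(\mathcal{F})=:d$. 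Applying the Sauer--Shelah lemma to $\mathcal{H}$ on the $(n)(K-1)$ pseudo-points $\{(Z_k,j\epsilon)\}_{k,j}$ yields $|\sigma(\mathcal{F})|\leq\sum_{i=0}^d\binom{n(K-1)}{i}$, which already gives a bound of the correct order $(enK/d)^d$.

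The main obstacle is sharpening the crude $\binom{nK}{i}$ to the stated $\binom{n}{i}(\mathcal{B}/\epsilon)^i$, because the latter exploits the product structure of the grid $\{Z_k\}\times\{j\epsilon\}$. I would handle this by an induction on $n+d$: peel off either a sample point $Z_n$ or a threshold level, and use the standard swapping argument of Sauer's proof adapted so that the two "halves" of the shattered set split across points and levels independently, so that only $\binom{n}{i}$ (not $\binom{nK}{i}$) appears in the recursion while levels contribute the factor $K^i$. This refinement, together with the identity $\sum_{i=1}^{d}\binom{n}{i}K^i\leq(enK/d)^d$ for $n\geq d$ (the usual Chernoff-style simplification of the Sauer sum), would deliver both the exact expression and the clean power-law form claimed in the proposition. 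The case $i=0$ is absent since $\binom{n}{0}(\mathcal{B}/\epsilon)^0=1$ corresponds to the trivial constant level that is already subsumed by the single representative chosen in each cover element.
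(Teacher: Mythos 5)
The paper offers no proof of this proposition; it is cited verbatim from Anthony and Bartlett. So the comparison is between your sketch and the argument one would have to reconstruct. Your overall strategy is the right one: quantize function values on a fixed sample into $K=\lceil\mathcal{B}/\epsilon\rceil$ levels, bound the covering number by the number of quantization patterns, identify pseudo-shattering of $\mathcal{F}$ with VC-shattering of the subgraph class $\mathcal{H}$, and then count. Steps 1--4 are correct. But the entire content of the proposition lies in the refinement you explicitly flag as ``the main obstacle'' and then merely assert can be done; as written, the proof is missing its key step, and the mechanism you describe for it is not accurate.

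Concretely, the improvement from $\sum_i\binom{n(K-1)}{i}$ to $\sum_i\binom{n}{i}(K-1)^i$ is not produced by a two-way Sauer split in which ``the two halves split across points and levels independently.'' When one peels off a sample point $Z_n$, one needs a $K$-way decomposition: in addition to the restriction class $\mathcal{G}_0=\{g|_{\{1,\dots,n-1\}}: g\in\mathcal{G}\}$, introduce for each level $j=1,\dots,K-1$ the class $\mathcal{G}_{n,j}$ of restrictions that admit two extensions to $Z_n$, one strictly below $j\epsilon$ and one at or above $j\epsilon$. The counting identity $|\mathcal{G}|\le|\mathcal{G}_0|+\sum_{j=1}^{K-1}|\mathcal{G}_{n,j}|$ holds because an equivalence class with $m$ distinct extension values is separated by at least $m-1$ levels, and the essential point is that each $\mathcal{G}_{n,j}$ has pseudo-dimension at most $d-1$ on the remaining $n-1$ points, because $j\epsilon$ supplies a single, fixed witnessing threshold at $Z_n$ that works for every member of $\mathcal{G}_{n,j}$ simultaneously. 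Without indexing the subclasses by the threshold, the dimension reduction fails: a subclass defined merely as ``restrictions with at least two extensions'' cannot be extended to a shattering of $S\cup\{Z_n\}$, since the needed threshold at $Z_n$ would then vary with the restriction. With the correct decomposition the recursion $\Psi(d,n)=\Psi(d,n-1)+(K-1)\Psi(d-1,n-1)$ closes and yields $\sum_{i=0}^d\binom{n}{i}(K-1)^i$. Two smaller bookkeeping points: the sum naturally starts at $i=0$ with factor $K-1=\lceil\mathcal{B}/\epsilon\rceil-1$ (the paper's statement silently replaces this by $\mathcal{B}/\epsilon$ with the $i=0$ term dropped), and your remark that the $i=0$ term is ``subsumed by the single representative'' is not a justification for omitting it.
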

	
\subsection{Bound on Statistical error}
By Lemmas \ref{L decomposition}, \ref{sym}, \ref{chaining} and Proposition \ref{covering number pdim}, we can  bound the  statistical error via
bounding the  pseudo-dimension  of $\mathcal{F}_i, i=1,...,13$. To this end, we  show that $\{\mathcal{F}_i\}$ are subsets of some neural network classes and then bound the pseudo-dimension of associate neural network classes.
	
 In later discussions, a ``$\mathrm{ReLU}^2-\mathrm{ReLU}^3$ network'' is a network with activation functions be either $\mathrm{ReLU}^2$ or $\mathrm{ReLU}^3$, and other terminology such as ``$\mathrm{ReLU}-\mathrm{ReLU}^2-\mathrm{ReLU}^3$'' are defined similarly.
	\begin{proposition} \label{DNN derivative}
		Let $u$ be a function implemented by a $\mathrm{ReLU}^2-\mathrm{ReLU}^3 $  $(\mathrm{ReLU}^3)$ network with depth $\mathcal{D}$ and width $\mathcal{W}$. {Then for $i=1,\cdots,d$, $D_i u:=\frac{\partial u}{\partial x_i}$} can be implemented by a $\mathrm{ReLU}-\mathrm{ReLU}^2-\mathrm{ReLU}^3$ $(\mathrm{ReLU}^2-\mathrm{ReLU}^3)$ network with depth $\mathcal{D}+2$ and width $\left(\mathcal{D}+2\right)\mathcal{W}$.  Moreover, the neural networks implementing $\{D_iu\}_{i=1}^{d}$ have the same architecture.
	\end{proposition}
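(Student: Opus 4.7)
The plan is to build the derivative network explicitly by the forward-mode chain rule applied to the original network $\mathbf{f}_0=\mathbf{x}$, $\mathbf{f}_\ell=\varrho_\ell(A_\ell\mathbf{f}_{\ell-1}+\mathbf{b}_\ell)$, $u=\mathbf{f}_\mathcal{D}=A_\mathcal{D}\mathbf{f}_{\mathcal{D}-1}+\mathbf{b}_\mathcal{D}$. The chain rule yields
\[
\partial_i\mathbf{f}_\ell \;=\; \bigl(\varrho_\ell'(A_\ell\mathbf{f}_{\ell-1}+\mathbf{b}_\ell)\bigr)\odot\bigl(A_\ell\,\partial_i\mathbf{f}_{\ell-1}\bigr),\qquad \partial_i\mathbf{f}_0=\mathbf{e}_i,\qquad D_iu=A_\mathcal{D}\,\partial_i\mathbf{f}_{\mathcal{D}-1}.
\]
Since $(\mathrm{ReLU}^3)'=3\,\mathrm{ReLU}^2$ and $(\mathrm{ReLU}^2)'=2\,\mathrm{ReLU}$, differentiating the activation produces exactly one of the lower-power activations appearing in the target activation set; this explains why only one extra activation type needs to be adjoined in each of the two cases of the statement. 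The strategy is therefore to propagate the pair $(\mathbf{f}_\ell,\partial_i\mathbf{f}_\ell)$ through the layers of a new network in parallel, using a short $\mathrm{ReLU}^3$ sub-network to realize the Hadamard product exactly.

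The crucial gadget is the exact $\mathrm{ReLU}^3$ representation of multiplication: using $ab=\tfrac12[(a+b)^2-a^2-b^2]$ together with the identity for $x^2$ already displayed in the proof of Theorem \ref{app error}, the product of two non-negative scalars is a fixed linear combination of $\mathrm{ReLU}^3$ applied to $O(1)$ shifted linear forms in $a$ and $b$. Since $\varrho_\ell'\ge 0$, the only sign issue lies in the second factor $A_\ell\partial_i\mathbf{f}_{\ell-1}$, which I would handle by the usual splitting $A_\ell\partial_i\mathbf{f}_{\ell-1}=v_+-v_-$ with $v_\pm\ge0$ obtained in one extra linear step. This allows each chain-rule step to be realized in a single layer of the derivative network, in parallel with the corresponding activation step of the forward pass.

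Stacking this construction yields a network of depth $\mathcal{D}+2$: $\mathcal{D}$ layers mirror the original network (each simultaneously computing $\varrho_\ell(\cdot)$ for the forward branch and $\varrho_\ell'(\cdot)$ for the derivative branch, and applying the multiplication gadget), and two extra layers sit at the ends, one to install $\partial_i\mathbf{f}_0=\mathbf{e}_i$ together with the input copy and sign splitting, and one to perform the final linear combination $A_\mathcal{D}\,\partial_i\mathbf{f}_{\mathcal{D}-1}$ together with the last Hadamard product. For the width, at each layer the construction maintains in parallel the forward activation $\mathbf{f}_\ell$, the derivative $\partial_i\mathbf{f}_\ell$ split by sign, and the $\mathrm{ReLU}^3$ multiplication gadgets for all $n_\ell\le\mathcal{W}$ coordinatewise products at that depth, each contributing $O(\mathcal{W})$ neurons; a careful accounting through the $\mathcal{D}+2$ layers shows the widest layer does not exceed $(\mathcal{D}+2)\mathcal{W}$.

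The main obstacle I expect is the width bookkeeping rather than the depth: a naive implementation of the Hadamard product risks multiplying the layer width by a constant at every depth, which would compound to $2^{\mathcal{D}}\mathcal{W}$ instead of $(\mathcal{D}+2)\mathcal{W}$. The remedy is to apply the multiplication gadget in parallel across the $n_\ell\le\mathcal{W}$ coordinates of a single layer and to reuse the pre-activations already available in the forward sub-network, so no multiplicative blow-up arises. The same-architecture claim for different $i=1,\dots,d$ is then immediate: only the initial vector $\partial_i\mathbf{f}_0=\mathbf{e}_i$ varies with $i$, while the depth, width, connectivity, and activation pattern of the constructed derivative network are independent of $i$.
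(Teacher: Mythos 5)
Your high-level strategy---forward-mode chain rule, propagating the pair $(\mathbf{f}_\ell,\partial_i\mathbf{f}_\ell)$ in parallel, realizing the layerwise Hadamard products by an exact multiplication gadget---is the same as the paper's, and your observation that differentiating each activation drops one power and thus adjoins exactly one lower-order activation type is exactly the right intuition. However, your choice of multiplication gadget creates a genuine gap that the paper's proof does not have.

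You propose to realize the products with the $\mathrm{ReLU}^3$ identity for $x^2$ from the proof of Theorem~\ref{app error}; that identity is only valid for $x\ge 0$, so your gadget only multiplies \emph{non-negative} scalars. You then plan to fix the sign of $A_\ell\partial_i\mathbf{f}_{\ell-1}$ by a splitting $v=v_+-v_-$ with $v_\pm\ge0$ ``obtained in one extra linear step.'' This cannot be a linear step: a linear map cannot take an unsigned input to a pair of nonnegative outputs summing to $|v|$, you need something like $v_\pm=\mathrm{ReLU}(\pm v)$, which is an activation. That is fatal in the case the proposition actually needs (source $\mathrm{ReLU}^3$, target $\mathrm{ReLU}^2$-$\mathrm{ReLU}^3$), because the target activation set does not contain $\mathrm{ReLU}$. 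One could try shifting by a constant ($v_+=v+C$, $v_-=C$), but this requires a uniform \emph{a priori} bound on the intermediate derivatives, changes the gadget's parameters with the input scale, and is not ``one linear step'' in any layer-by-layer recursion where the split must be redone after every affine map.

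The paper sidesteps all of this by using the $\mathrm{ReLU}^2$ multiplication identity (equation~\ref{multiplication by ReLU2}), $xy=\tfrac14[\mathrm{ReLU}^2(x+y)+\mathrm{ReLU}^2(-x-y)-\mathrm{ReLU}^2(x-y)-\mathrm{ReLU}^2(-x+y)]$, which is exact for \emph{all} real $x,y$ because $x^2=\mathrm{ReLU}^2(x)+\mathrm{ReLU}^2(-x)$. Since $\mathrm{ReLU}^2$ is already in the target activation set in both cases of the proposition, no sign splitting and no boundedness assumption is required. Finally, your width discussion is aware of the danger of multiplicative blow-up but does not actually carry out the bookkeeping; the linear growth $(k+2)\mathcal{W}$ at layer $k$ is the content of the paper's inductive argument, and in your variant the extra sign-splitting apparatus would have to be tracked through that induction as well. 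As written, the proposal has a concrete gap at the sign-splitting step and an unsupported width claim.
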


%
%

	\begin{proof}
		For activation function $\rho$ in each unit, we denote $\widetilde{\rho}$ as its derivative, i.e., $\widetilde{\rho}(x)=\rho'(x)$. We then have
		\begin{equation*}
			\widetilde{\rho}(x)=\left\{
			\begin{array}{ll}
				2\mathrm{ReLU},&\rho=\mathrm{ReLU}^2,\\
				3\mathrm{ReLU}^2,&\rho=\mathrm{ReLU}^3.
			\end{array}
			\right.
		\end{equation*}
		Let $1\leq i\leq d$. We deal with the first two layers in details and apply induction for layers $k\geq3$ since there  are  a  little bit difference for the first two layer. For the first layer, we have for any $q=1,2\cdots,n_1$
		\begin{equation*}
			D_iu_q^{(1)}=D_i\rho_{q}^{(1)}\left(\sum_{j=1}^{d}a_{qj}^{(1)}x_j+b_q^{(1)}\right)
			=\widetilde{\rho_{q}^{(1)}}\left(\sum_{j=1}^{d}a_{qj}^{(1)}x_j+b_q^{(1)}\right)\cdot a_{qi}^{(1)}
		\end{equation*}
		Hence $D_iu_q^{(1)}$ can be implemented by a $\mathrm{ReLU}-\mathrm{ReLU}^2-\mathrm{ReLU}^3$ network with depth $2$ and width $1$. For the second layer,
		\begin{equation*}
			D_iu_q^{(2)}=D_i\rho_q^{(2)}\left(\sum_{j=1}^{n_1}a_{qj}^{(2)}u_j^{(1)}+b_q^{(2)}\right)
			=\widetilde{\rho_q^{(2)}}\left(\sum_{j=1}^{n_1}a_{qj}^{(2)}u_j^{(1)}+b_q^{(2)}\right)\cdot\sum_{j=1}^{n_1}a_{qj}^{(2)}D_iu_j^{(1)}.
		\end{equation*}
		Since $\widetilde{\rho_q^{(2)}}\left(\sum_{j=1}^{n_1}a_{qj}^{(2)}u_j^{(1)}+b_q^{(2)}\right)$ and $\sum_{j=1}^{n_1}a_{qj}^{(2)}D_iu_j^{(1)}$ can be implemented by two $\mathrm{ReLU}-\mathrm{ReLU}^2-\mathrm{ReLU}^3$ subnetworks, respectively, and the multiplication can also be implemented by
		\begin{equation} \label{multiplication by ReLU2}
			\begin{split}
				x\cdot y&=\frac{1}{4}\left[(x+y)^2-(x-y)^2\right]\\
				&=\frac{1}{4}\left[\mathrm{ReLU}^2(x+y)+\mathrm{ReLU}^2(-x-y)-\mathrm{ReLU}^2(x-y)-\mathrm{ReLU}^2(-x+y)\right],
			\end{split}
		\end{equation}
		we conclude that $D_iu_q^{(2)}$ can be implemented by a $\mathrm{ReLU}-\mathrm{ReLU}^2-\mathrm{ReLU}^3$ network. We have $$\mathcal{D}\left(\widetilde{\rho_q^{(2)}}\left(\sum_{j=1}^{n_1}a_{qj}^{(2)}u_j^{(1)}+b_q^{(2)}\right)\right)=3, \mathcal{W}\left(\widetilde{\rho_q^{(2)}}\left(\sum_{j=1}^{n_1}a_{qj}^{(2)}u_j^{(1)}+b_q^{(2)}\right)\right)\leq\mathcal{W}$$ and $$\mathcal{D}\left(\sum_{j=1}^{n_1}a_{qj}^{(2)}D_iu_j^{(1)}\right)=2, \mathcal{W}\left(\sum_{j=1}^{n_1}a_{qj}^{(2)}D_iu_j^{(1)}\right)\leq\mathcal{W}.$$ Thus $\mathcal{D}\left(D_iu_q^{(2)}\right)=4,$ $\mathcal{W}\left(D_iu_q^{(2)}\right)\leq\max\{2\mathcal{W},4\}$.
		Now we apply induction for layers $k\geq3$. For the third layer,
		$
			D_iu_q^{(3)}=D_i\rho_q^{(3)}\left(\sum_{j=1}^{n_2}a_{qj}^{(3)}u_j^{(2)}+b_q^{(3)}\right)
			=\widetilde{\rho_q^{(3)}}\left(\sum_{j=1}^{n_2}a_{qj}^{(3)}u_j^{(2)}+b_q^{(3)}\right)\cdot\sum_{j=1}^{n_2}a_{qj}^{(3)}D_iu_j^{(2)}.
		$
		Since
		$\mathcal{D}\left(\rho_q^{(3)}\left(\sum_{j=1}^{n_2}a_{qj}^{(3)}u_j^{(2)}+b_q^{(3)}\right)\right)=4$, $\mathcal{W}\left(\rho_q^{(3)}\left(\sum_{j=1}^{n_2}a_{qj}^{(3)}u_j^{(2)}+b_q^{(3)}\right)\right)\leq\mathcal{W}$ and $$\mathcal{D}\left(\sum_{j=1}^{n_2}a_{qj}^{(3)}D_iu_j^{(2)}\right)=4, \mathcal{W}\left(\sum_{j=1}^{n_1}a_{qj}^{(3)}D_iu_j^{(2)}\right)\leq\max\{2\mathcal{W},4\mathcal{W}\}=4\mathcal{W},$$ we conclude that $D_iu_q^{(3)}$ can be implemented by a $\mathrm{ReLU}-\mathrm{ReLU}^2-\mathrm{ReLU}^3$ network and $\mathcal{D}\left(D_iu_q^{(3)}\right)=5$, $\mathcal{W}\left(D_iu_q^{(3)}\right)\leq\max\{5\mathcal{W},4\}=5\mathcal{W}$.
		
		We assume that $D_iu_q^{(k)}(q=1,2,\cdots,n_k)$ can be implemented by a $\mathrm{ReLU}$-$\mathrm{ReLU}^2$ network and $\mathcal{D}\left(D_iu_q^{(k)}\right)=k+2$, $\mathcal{W}\left(D_iu_q^{(3)}\right)\leq(k+2)\mathcal{W}$. For the $(k+1)-$th layer,
		$
			D_iu_q^{(k+1)}=D_i\rho_q^{(k+1)}\left(\sum_{j=1}^{n_k}a_{qj}^{(k+1)}u_j^{(k)}+b_q^{(k+1)}\right) =\widetilde{\rho_q^{(k+1)}}\left(\sum_{j=1}^{n_k}a_{qj}^{(k+1)}u_j^{(k)}+b_q^{(k+1)}\right)\cdot\sum_{j=1}^{n_k}a_{qj}^{(k+1)}D_iu_j^{(k)}.$
		Since  $\mathcal{D}\left(\widetilde{\rho_q^{(k+1)}}\left(\sum_{j=1}^{n_k}a_{qj}^{(k+1)}u_j^{(k)}+b_q^{(k+1)}\right)\right)=k+2$, $\mathcal{W}\left(\widetilde{\rho_q^{(k+1)}}\left(\sum_{j=1}^{n_k}a_{qj}^{(k+1)}u_j^{(k)}+b_q^{(k+1)}\right)\right)\leq\mathcal{W}$ and $\mathcal{D}\left(\sum_{j=1}^{n_k}a_{qj}^{(k+1)}D_iu_j^{(k)}\right)=k+2$, $\mathcal{W}\left(\sum_{j=1}^{n_k}a_{qj}^{(k+1)}D_iu_j^{(k)}\right)\leq\max\{(k+2)\mathcal{W},4\mathcal{W}\}=(k+2)\mathcal{W}$, we conclude that $D_iu_q^{(k+1)}$ can be implemented by a $\mathrm{ReLU}-\mathrm{ReLU}^2-\mathrm{ReLU}^3$ network and $\mathcal{D}\left(D_iu_q^{(k+1)}\right)=k+3$, $\mathcal{W}\left(D_iu_q^{(k+1)}\right)\leq\max\{(k+3)\mathcal{W},4\}=(k+3)\mathcal{W}$.
		Hence we derive that $D_iu=D_iu_1^{\mathcal{D}}$ can be implemented by a $\mathrm{ReLU}-\mathrm{ReLU}^2-\mathrm{ReLU}^3$ network and $\mathcal{D}\left(D_iu\right)=\mathcal{D}+2$, $\mathcal{W}\left(D_iu\right)\leq\left(\mathcal{D}+2\right)\mathcal{W}$. And through our argument, we know that the neural networks implementing $\{D_iu\}_{i=1}^{d}$ have the same architecture.
	\end{proof}
	
	We now present the bound for pseudo-dimension of $\mathcal{N}(\mathcal{D},\mathcal{W},\{\mathrm{ReLU},\mathrm{ReLU}^2,\mathrm{ReLU}^3\})$ with $\mathcal{D},\mathcal{W}\in\mathbb{N}$. We  need the following  Lemma.
	\begin{lemma}(Theorem 8.3 in \cite{anthony2009neural}) \label{lemma covering number}
		Let $p_1,\cdots,p_m$ be polynomials with $n$ variables  of degree at most $d$. If $n\leq m$, then
		\begin{equation*}
			|\{(\mathrm{sign}(p_1(x)),\cdots,\mathrm{sign}(p_m(x))):x\in\mathbb{R}^n\}|\leq2\left(\frac{2emd}{n}\right)^n.
		\end{equation*}
	\end{lemma}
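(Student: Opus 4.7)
The plan is to invoke the classical Warren/Milnor--Thom estimate on the number of distinct sign patterns realized by a family of real polynomials. This result is the standard backbone of pseudo-dimension bounds for piecewise-polynomial neural networks, and its proof rests on real algebraic geometry. I would organize the argument in three stages.

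First, I would reduce the count of sign vectors in $\{-1,0,+1\}^m$ to the count of open cells of a smooth hypersurface arrangement. By perturbing $p_i$ to $p_i-\varepsilon_i$ for generic small $\varepsilon_i>0$, one ensures that $0$ is a regular value of $(p_1,\dots,p_m)$, so that the zero set $Z:=\bigcup_i\{p_i=0\}$ is a transverse hypersurface arrangement. Each sign vector in $\{-1,0,+1\}^m$ realized by the original $p_i$ then corresponds to one of the open cells of $\mathbb{R}^n\setminus Z$ after the perturbation, with an overall bookkeeping factor of at most $2$ to recover the degenerate patterns containing zero entries.

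Second, I would bound the number of open cells of $\mathbb{R}^n\setminus Z$. This is where the Warren/Milnor--Thom estimate enters: for each subset $S\subset\{1,\dots,m\}$ of size $i\le n$, assemble the selected polynomials into a single auxiliary polynomial of degree $O(id)$ via sums of squares, and then apply Milnor's bound $D(2D-1)^{n-1}$ on the sum of Betti numbers of a real algebraic set in $\mathbb{R}^n$ cut out by a polynomial of degree $D$. Summing over all $i\le n$ yields an estimate of the form $\sum_{i=0}^n \binom{m}{i}(2d)^i$ (or a close variant) for the number of connected components realized.

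Third, I would simplify this expression to match the stated bound. The standard Sauer--Shelah-style inequality $\sum_{i=0}^n\binom{m}{i}\le(em/n)^n$ (valid for $n\le m$), combined with the factor $(2d)^n$ extracted from the degree contribution, yields $(2emd/n)^n$; the factor $2$ carried over from the first stage produces the claimed $2(2emd/n)^n$. The main obstacle is the second stage, namely the Milnor--Thom/Warren estimate itself; this is a substantial result from real algebraic geometry, and in the neural network literature it is typically invoked as a black box (see e.g.\ \cite{anthony2009neural}). The work on our side is essentially the degree bookkeeping for the specific auxiliary polynomial construction and the combinatorial simplification in the third stage.
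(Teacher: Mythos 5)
The paper's ``proof'' of this lemma is a one-line citation to Theorem~8.3 of Anthony and Bartlett's book, so you are attempting something the paper does not: a self-contained derivation from the Milnor--Thom/Warren machinery. Your high-level structure (perturb to general position, bound connected components of the complement of a polynomial arrangement, then simplify combinatorially with a Sauer--Shelah-type estimate) is the right skeleton and matches what the cited theorem's proof actually does. Two specific points, however, are not rigorous as written. First, the claim that degenerate sign patterns (those containing a $0$ entry) are recovered ``with an overall bookkeeping factor of at most $2$'' is not the standard argument and is not obviously correct: after the perturbation $p_i\mapsto p_i-\varepsilon_i$, every point where $p_i=0$ acquires sign $-1$, so distinct original patterns differing only in their zero coordinates can collapse to the same cell; the actual argument in Warren/Anthony--Bartlett handles zero entries by a more careful case analysis over subsets of vanishing coordinates, and the factor of $2$ in the final bound comes from Milnor's Betti-number estimate, not from re-absorbing the zero patterns. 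Second, the ``sums of squares'' auxiliary polynomial in your second stage is not how the degree bookkeeping is done in Warren's proof (which works with products and a careful induction on the number of polynomials); a sum of squares of degree-$d$ polynomials has degree $2d$ and its zero set is the intersection, not the union, which is the wrong object for counting cells of the complement. If your intent is simply to match the paper, citing Anthony--Bartlett Theorem~8.3 (or Warren's original 1968 theorem) is exactly what the paper does and is sufficient; if you want a self-contained proof, the reduction of the zero-coordinate patterns and the exact form of the Milnor--Thom application both need to be replaced by the careful versions in those references.
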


	\begin{proposition}\label{pdimb}
		For any $\mathcal{D},\mathcal{W}\in\mathbb{N}$,
		\begin{equation*}
			\mathrm{Pdim}(\mathcal{N}(\mathcal{D},\mathcal{W},\{\mathrm{ReLU},\mathrm{ReLU}^2,\mathrm{ReLU}^3\}))=\mathcal{O}(\mathcal{D}^2\mathcal{W}^2(\mathcal{D}+\log\mathcal{W})).
		\end{equation*}	
	\end{proposition}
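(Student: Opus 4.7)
The plan is to adapt the classical piecewise-polynomial pseudo-dimension argument of Bartlett--Harvey--Liaw--Mehrabian, which rests on Warren's sign-pattern bound (exactly the form stated as Lemma \ref{lemma covering number}), to the mixed activation family $\{\mathrm{ReLU},\mathrm{ReLU}^2,\mathrm{ReLU}^3\}$. The starting observation is that each of these three activations is piecewise polynomial with exactly two pieces separated by $\{z=0\}$ and with maximum polynomial degree three. Consequently, for every fixed input $x$, the network output $f_\phi(x)$ is a piecewise polynomial function of the parameter vector $\phi\in\mathbb{R}^U$, where $U=\mathcal{O}(\mathcal{D}\mathcal{W}^2)$ counts the trainable weights and biases.

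Let $m$ denote the pseudo-dimension and suppose $\{x_1,\dots,x_m\}$ is pseudo-shattered by $\mathcal{N}$ with witnesses $\{y_1,\dots,y_m\}$, so that all $2^m$ sign vectors $(\mathrm{sign}(f_\phi(x_i)-y_i))_{i=1}^{m}$ are realized as $\phi$ ranges over $\mathbb{R}^U$. I will bound this count from above. For each input $x_i$ and each hidden neuron indexed by $(\ell,q)$, the pre-activation $p_{\ell,q,i}(\phi):=(A_\ell f_{\ell-1}(x_i;\phi)+b_\ell)_q$ is a polynomial in $\phi$, and by a layerwise induction, within any region on which the activation pattern of layers $1,\dots,\ell-1$ is fixed, the degree $d_\ell$ of $f_\ell(x_i;\phi)$ in $\phi$ obeys $d_0=0$ and $d_\ell\leq 3(d_{\ell-1}+1)$, whence $d_\ell\leq 3^\ell$; thus every breakpoint polynomial has degree at most $3^{\mathcal{D}}$.

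Next I would invoke Warren's bound (Lemma \ref{lemma covering number}) twice. Across all $\mathcal{O}(\mathcal{D}\mathcal{W})$ neurons and all $m$ inputs there are $M=\mathcal{O}(m\mathcal{D}\mathcal{W})$ breakpoint polynomials of degree at most $3^{\mathcal{D}}$ in $U$ variables, so the number of distinct activation-pattern regions in $\mathbb{R}^U$ is at most $(CM\cdot 3^{\mathcal{D}}/U)^{U}$. On any fixed such region each of the $m$ functions $\phi\mapsto f_\phi(x_i)-y_i$ is a single polynomial of degree at most $3^{\mathcal{D}}$, so the number of sign patterns realized on the region is at most $(Cm\cdot 3^{\mathcal{D}}/U)^{U}$. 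Multiplying these two bounds, requiring the product to be at least $2^m$, and taking logarithms yields the transcendental inequality
\begin{equation*}
m \;\leq\; C\,U\bigl(\mathcal{D}+\log m+\log(\mathcal{D}\mathcal{W})\bigr).
\end{equation*}

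To finish, I would apply the elementary fact $x\leq a+b\log x \Rightarrow x=\mathcal{O}(a+b\log b)$ and then substitute $U=\mathcal{O}(\mathcal{D}\mathcal{W}^2)$, which produces a bound absorbable into $\mathcal{O}(\mathcal{D}^2\mathcal{W}^2(\mathcal{D}+\log\mathcal{W}))$. I expect the main technical point to be the layerwise degree recursion combined with the correct combinatorial bookkeeping of breakpoint polynomials: the doubly-exponential-looking factor $3^{\mathcal{D}}$ must enter only through a logarithm, producing the linear-in-$\mathcal{D}$ term, and the mixing of the three activations must not blow up the degree recursion beyond the worst case of $\mathrm{ReLU}^3$. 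Once the degree and counting estimates are in hand, the two applications of Warren's lemma and the transcendental inequality are standard.
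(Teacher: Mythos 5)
Your overall plan is the same as the paper's: both reduce to the piecewise-polynomial pseudo-dimension argument of Bartlett--Harvey--Liaw--Mehrabian with Lemma~\ref{lemma covering number} as the engine, and your degree recursion $d_\ell\leq 3(d_{\ell-1}+1)$ correctly captures the worst case among the three activations, matching the paper's $1+(\mathcal{D}-1)3^{\mathcal{D}-1}$ up to constants in the log. So the approach is right, but there is a genuine gap in your region-counting step that needs repair.

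The step ``there are $M=\mathcal{O}(m\mathcal{D}\mathcal{W})$ breakpoint polynomials of degree at most $3^{\mathcal{D}}$ in $U$ variables, so the number of distinct activation-pattern regions is at most $(CM\cdot 3^{\mathcal{D}}/U)^{U}$'' applies Lemma~\ref{lemma covering number} to all pre-activations across all layers simultaneously, treating them as a single family of polynomials in $\phi\in\mathbb{R}^U$. But the pre-activations of layer $\ell\geq 2$ are not polynomials in $\phi$ on $\mathbb{R}^U$; as you yourself note, they are polynomials only on each region where the activation pattern of layers $1,\dots,\ell-1$ is fixed. Warren's lemma cannot be invoked once over the mixed, globally piecewise-polynomial family. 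The correct construction (the one the paper uses, via inequality~(\ref{pdimb1})) refines the parameter space layer by layer: partition $\mathbb{R}^U$ using the layer-$1$ pre-activations, then within each cell partition again using the layer-$2$ pre-activations (now genuine polynomials on that cell), and so on. Taking logarithms of the resulting product $\prod_{i}2\bigl(2emk_i(1+(i-1)3^{i-1})/\mathcal{M}_i\bigr)^{\mathcal{M}_i}$ yields a sum $\sum_i \mathcal{M}_i$ of cumulative parameter counts, which is $\Theta(\mathcal{D}^2\mathcal{W}^2)$, not $\Theta(\mathcal{D}\mathcal{W}^2)=\Theta(U)$. Consequently the transcendental inequality you should arrive at is of the form $m\leq C\bigl(\sum_i\mathcal{M}_i\bigr)(\mathcal{D}+\log m+\log\mathcal{W})$, and only after inserting $\sum_i\mathcal{M}_i=\mathcal{O}(\mathcal{D}^2\mathcal{W}^2)$ does the claimed $\mathcal{O}(\mathcal{D}^2\mathcal{W}^2(\mathcal{D}+\log\mathcal{W}))$ emerge. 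Your inequality $m\leq CU(\mathcal{D}+\log m+\log(\mathcal{D}\mathcal{W}))$ is missing the extra factor of $\mathcal{D}$ and would give $\mathcal{O}(\mathcal{D}\mathcal{W}^2(\mathcal{D}+\log\mathcal{W}))$, which this argument does not actually establish. If you replace your one-shot application of Warren's bound with the layerwise refinement, the rest of your proof (the degree recursion, the per-cell sign-pattern count, and the elementary fact $x\leq a+b\log x\Rightarrow x=\mathcal{O}(a+b\log b)$) goes through exactly as you describe.
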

	

	\begin{proof}
		The argument is follows from the  proof of Theorem 6 in \cite{bartlett2019nearly}.  The result stated here is somewhat stronger then Theorem 6 in \cite{bartlett2019nearly} since $\mathrm{VCdim}(\mathrm{sign}(\mathcal{F}))\leq \mathrm{Pdim}(\mathcal{F})$ for any function class $\mathcal{F}$. We consider a new set of functions
		$$
			\mathcal{\widetilde{N}}=\{\widetilde{u}(x,y)=\mathrm{sign}(u(x)-y): u\in\mathcal{N}(\mathcal{D},\mathcal{W},\{\mathrm{ReLU},\mathrm{ReLU}^2,\mathrm{ReLU}^3\})\}.$$
		It is clear that $\mathrm{Pdim}(\mathcal{N}(\mathcal{D},\mathcal{W},\{\mathrm{ReLU},\mathrm{ReLU}^2,\mathrm{ReLU}^3\}))\leq\mathrm{VCdim}(\mathcal{\widetilde{N}})$. We now bound the VC-dimension of $\mathcal{\widetilde{N}}$. Denoting $\mathcal{M}$ as the total number of parameters (weights and biases) in the neural network implementing functions in $\mathcal{N}$, in our case we want to derive the uniform bound for
		\begin{equation*}
			K_{\{x_i\},\{y_i\}}(m):=|\{(\operatorname{sign}(f(x_{1}, a)-y_1), \ldots, \operatorname{sign}(u(x_{m}, a)-y_m)): a \in \mathbb{R}^{\mathcal{M}}\}|
		\end{equation*}
		over all $\{x_i\}_{i=1}^{m}\subset X$ and $\{y_i\}_{i=1}^{m}\subset\mathbb{R}$. Actually the maximum of $K_{\{x_i\},\{y_i\}}(m)$ over all $\{x_i\}_{i=1}^{m}\subset X$ and $\{y_i\}_{i=1}^{m}\subset\mathbb{R}$ is the growth function $\mathcal{G}_{\mathcal{\widetilde{N}}}(m)$.
		In order to apply Lemma \ref{lemma covering number}, we partition the parameter space $\mathbb{R}^{\mathcal{M}}$ into several subsets to ensure that in each subset $u(x_i,a)-y_i$ is a polynomial with respect to $a$ without any breakpoints. In fact, our partition is exactly the same as the partition in \cite{bartlett2019nearly}. Denote the partition as $\{P_1,P_2,\cdots,P_N\}$ with some integer $N$ satisfying
		\begin{equation} \label{pdimb1}
			N\leq\prod_{i=1}^{\mathcal{D}-1}2\left(\frac{2emk_i(1+(i-1)3^{i-1})}{\mathcal{M}_i}\right)^{\mathcal{M}_i}
		\end{equation}
		where $k_i$ and $\mathcal{M}_i$ denotes the number of units at the $i$th layer and the total number of parameters at the inputs to units in all the layers up to layer $i$ of the neural network implementing functions in $\mathcal{N}$, respectively. See \cite{bartlett2019nearly} for the construction of the partition. Obviously we have
		\begin{equation} \label{pdimb2}
			K_{\{x_i\},\{y_i\}}(m)\leq\sum_{i=1}^{N}|\{(\operatorname{sign}(u(x_{1}, a)-y_1), \cdots, \operatorname{sign}(u(x_{m}, a)-y_m)): a\in P_i\}|
		\end{equation}
		Note that $u(x_i,a)-y_i$ is a polynomial with respect to $a$ with degree the same as the degree of $u(x_i,a)$, which is equal to $1 + (\mathcal{D}-1)3^{\mathcal{D}-1}$ as shown in \cite{bartlett2019nearly}.  Hence by  Lemma \ref{lemma covering number}, we have
		\begin{equation}
		   \begin{aligned}
			&|\{(\operatorname{sign}(u(x_{1}, a)-y_1), \cdots, \operatorname{sign}(u(x_{m}, a)-y_m)): a\in P_i\}|\\
	&\leq2\left(\frac{2em(1+(\mathcal{D}-1)3^{\mathcal{D}-1})}{\mathcal{M}_{\mathcal{D}}}\right)^{\mathcal{M}_{\mathcal{D}}}\label{pdimb3}.
		   \end{aligned}
		\end{equation}
		Combining (\ref{pdimb1}), (\ref{pdimb2}) and  (\ref{pdimb3}) yields
		\begin{equation*}
			K_{\{x_i\},\{y_i\}}(m)\leq\prod_{i=1}^{\mathcal{D}}2\left(\frac{2emk_i(1+(i-1)3^{i-1})}{\mathcal{M}_i}\right)^{\mathcal{M}_i}.
		\end{equation*}
		We then have
		\begin{equation*}
			\mathcal{G}_{\mathcal{\widetilde{N}}}(m)\leq\prod_{i=1}^{\mathcal{D}}2\left(\frac{2emk_i(1+(i-1)3^{i-1})}{\mathcal{M}_i}\right)^{\mathcal{M}_i},
		\end{equation*}
		since the maximum of $K_{\{x_i\},\{y_i\}}(m)$ over all $\{x_i\}_{i=1}^{m}\subset X$ and $\{y_i\}_{i=1}^{m}\subset\mathbb{R}$ is the growth function $\mathcal{G}_{\mathcal{\widetilde{N}}}(m)$. Doing some algebras  as that of  the proof of Theorem 6 in \cite{bartlett2019nearly}, we obtain
		\begin{equation*}
		   \begin{aligned}
			\mathrm{Pdim}(\mathcal{N}(\mathcal{D},\mathcal{W},\{\mathrm{ReLU},\mathrm{ReLU}^2,\mathrm{ReLU}^3\}))&\leq \mathcal{O}\left(\mathcal{D}^2\mathcal{W}^2\left( \mathcal{D}+\log \mathcal{W}\right)\right).
		   \end{aligned}
		\end{equation*}
	\end{proof}

	With the above preparations,  we are able to derive our result on  the statistical error.
	\begin{theorem} \label{sta error}
		Let $\mathcal{D},\mathcal{W}\in\mathbb{N},\mathcal{B}\in\mathbb{R}^+$. For any $\epsilon>0$, if the number of sample
		\begin{equation*}
			N,M= C(d,\Omega,\mathfrak{Z},\mathcal{B})\mathcal{D}^6\mathcal{W}^2(\mathcal{D}+\log\mathcal{W})\left(\frac{1}{\epsilon}\right)^{2+\delta}
		\end{equation*}
		where $\delta$ is an arbitrarily small number
then we have
		\begin{equation*}
			\mathbb{E}_{\{{X_k}\}_{k=1}^{N},\{{Y_k}\}_{k=1}^{M}}\sup_{u\in\mathcal{P}}\left|\mathcal{L}(u)-\widehat{\mathcal{L}}(u)\right|\leq \epsilon,
		\end{equation*}
		where $\mathcal{P}=\mathcal{N}(\mathcal{D},\mathcal{W},\{\|\cdot\|_{C^2(\bar{\Omega})},\mathcal{B}\},\{\mathrm{ReLU}^3\})$.
	\end{theorem}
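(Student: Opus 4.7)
The plan is to chain four ingredients already assembled in the paper: the decomposition of Lemma \ref{L decomposition}, the symmetrization of Lemma \ref{sym}, the chaining bound of Lemma \ref{chaining}, and the pseudo-dimension-to-covering-number link of Proposition \ref{covering number pdim}. Applying the first two, the target reduces to
$$\mathbb{E}\sup_{u\in\mathcal{P}}|\mathcal{L}(u)-\widehat{\mathcal{L}}(u)|\;\leq\; C(d,\Omega,\mathfrak{Z})\sum_{j=1}^{13}\mathfrak{R}(\mathcal{F}_j),$$
so it suffices to produce a bound $\mathfrak{R}(\mathcal{F}_j)\lesssim\epsilon/C_{d,\Omega,\mathfrak{Z}}$ uniformly in $j$.

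The central step is to exhibit each $\mathcal{F}_j$ as a subset of a mixed-activation network class whose architecture is only polynomially larger than $(\mathcal{D},\mathcal{W})$. Proposition \ref{DNN derivative}, applied once to any $u_{x_i}$ and twice to any $u_{x_ix_j}$, produces an \emph{exact} $\mathrm{ReLU}$-$\mathrm{ReLU}^2$-$\mathrm{ReLU}^3$ implementation of depth $\mathcal{D}+O(1)$ and width $O(\mathcal{D}^2\mathcal{W})$. Running two such subnetworks in parallel and appending one layer based on the $\mathrm{ReLU}^2$ multiplication identity of equation (\ref{multiplication by ReLU2}) realizes every product appearing in $\mathcal{F}_1,\ldots,\mathcal{F}_{13}$ exactly. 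Consequently
$$\mathcal{F}_j\;\subset\;\mathcal{N}\bigl(\widetilde{\mathcal{D}},\widetilde{\mathcal{W}},\{\mathrm{ReLU},\mathrm{ReLU}^2,\mathrm{ReLU}^3\}\bigr),\qquad \widetilde{\mathcal{D}}=O(\mathcal{D}),\quad \widetilde{\mathcal{W}}=O(\mathcal{D}^2\mathcal{W}),$$
and the constraint $\|u\|_{C^2(\bar\Omega)}\leq\mathcal{B}$ yields $\|f\|_{L^\infty(\Omega)}\leq C\mathcal{B}^2$ for each $f\in\mathcal{F}_j$, so the $L^\infty$-diameter of every class is controlled.

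Applying Proposition \ref{pdimb} to this enveloping class then delivers
$$\mathrm{Pdim}(\mathcal{F}_j)\;=\;O\bigl(\widetilde{\mathcal{D}}^2\widetilde{\mathcal{W}}^2(\widetilde{\mathcal{D}}+\log\widetilde{\mathcal{W}})\bigr)\;=\;O\bigl(\mathcal{D}^6\mathcal{W}^2(\mathcal{D}+\log\mathcal{W})\bigr).$$
Proposition \ref{covering number pdim} converts this into $\log\mathcal{C}_\infty(\epsilon,\mathcal{F}_j,N)\lesssim \mathrm{Pdim}(\mathcal{F}_j)\log(N\mathcal{B}^2/\epsilon)$, and Lemma \ref{chaining}, optimized over $\delta$ near $1/\sqrt{N}$, produces
$$\mathfrak{R}(\mathcal{F}_j)\;\lesssim\;\mathcal{B}^2\sqrt{\frac{\mathrm{Pdim}(\mathcal{F}_j)\,\log N}{N}}.$$
Forcing the right-hand side to be $\leq\epsilon/(13\,C_{d,\Omega,\mathfrak{Z}})$ for both the interior sample count $N$ and the boundary sample count $M$ (with $\mathcal{F}_{11},\mathcal{F}_{13}$ driving the $M$ requirement via the restriction $\mathcal{N}^3|_{\partial\Omega}$), and absorbing the $\log N$ factor into an extra $\epsilon^{-\delta}$ with $\delta>0$ arbitrarily small, yields exactly the claimed $N,M=C(d,\Omega,\mathfrak{Z},\mathcal{B})\,\mathcal{D}^6\mathcal{W}^2(\mathcal{D}+\log\mathcal{W})(1/\epsilon)^{2+\delta}$.

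The main obstacle is the non-Lipschitz composition buried inside every $\mathcal{F}_j$ with $j\leq 9$: the map $u\mapsto u_{x_ix_j}(x)\,u_{x_{i'}x_{j'}}(x)$ is not a Lipschitz (or even bounded) operation from $\mathcal{P}$ equipped with any reasonable norm, so the standard contraction principle for Rademacher complexity does not apply and a direct reduction to $\mathfrak{R}(\mathcal{N}^3)$ is unavailable. The detour through the explicit network realization of derivatives in Proposition \ref{DNN derivative}, combined with the exact $\mathrm{ReLU}^2$ multiplication identity, is precisely what sidesteps this difficulty: it lets us replace an uncontrolled Lipschitz constant by the combinatorial pseudo-dimension estimate of Proposition \ref{pdimb}, which is what renders the whole chain of estimates uniform in $\mathcal{B}$ up to polynomial factors.
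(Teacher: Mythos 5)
Your proposal is correct and follows essentially the same route as the paper: decompose via Lemma \ref{L decomposition}, symmetrize via Lemma \ref{sym}, embed each $\mathcal{F}_j$ into a mixed-activation network class by means of Proposition \ref{DNN derivative} together with the $\mathrm{ReLU}^2$ multiplication identity (\ref{multiplication by ReLU2}), then combine Proposition \ref{pdimb}, Proposition \ref{covering number pdim}, and the chaining bound of Lemma \ref{chaining} to control the Rademacher complexities. The only point to flag is a loose phrase at the end --- the final constant is not ``uniform in $\mathcal{B}$'' but scales with $\max\{\mathcal{B},\mathcal{B}^2\}$, which is already reflected both in your intermediate bound $\mathfrak{R}(\mathcal{F}_j)\lesssim\mathcal{B}^2\sqrt{\mathrm{Pdim}(\mathcal{F}_j)\log N/N}$ and in the theorem's $C(d,\Omega,\mathfrak{Z},\mathcal{B})$, so this is a wording issue rather than a gap.
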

	
\begin{proof}
We need the following Lemma. Let $\Phi = \{\mathrm{ReLU},\mathrm{ReLU}^2,\mathrm{ReLU}^3\}$.

\begin{lemma} \label{F subset N}
{Let $\{\mathcal{F}_i\}_{i=1}^{13}$ be defined in Lemma \ref{sym}. There holds}
\begin{equation*}
\begin{aligned}
			\mathcal{F}_1&\subset\mathcal{N}_1:=\mathcal{N}(\mathcal{D}+5,2(\mathcal{D}+2)(\mathcal{D}+4)\mathcal{W},\{\|\cdot\|_{C(\bar{\Omega})},\mathcal{B}^2\},\Phi)\\
			\mathcal{F}_2&\subset\mathcal{N}_2:=\mathcal{N}(\mathcal{D}+3,2(\mathcal{D}+2)\mathcal{W},\{\|\cdot\|_{C(\bar{\Omega})},\mathcal{B}^2\},\Phi)\\
			\mathcal{F}_3&\subset\mathcal{N}_3:=\mathcal{N}(\mathcal{D}+1,\mathcal{W},\{\|\cdot\|_{C(\bar{\Omega})},\mathcal{B}^2\},\Phi)\\
			\mathcal{F}_5&\subset\mathcal{N}_5:=\mathcal{N}(\mathcal{D}+5,(\mathcal{D}+2)(\mathcal{D}+5)\mathcal{W},\{\|\cdot\|_{C(\bar{\Omega})},\mathcal{B}^2\},\Phi)\\
			\mathcal{F}_6&\subset\mathcal{N}_6:=\mathcal{N}(\mathcal{D}+3,(\mathcal{D}+3)\mathcal{W},\{\|\cdot\|_{C(\bar{\Omega})},\mathcal{B}^2\},\Phi)\\
			\mathcal{F}_7&\subset\mathcal{N}_7:=\mathcal{N}(\mathcal{D}+4,(\mathcal{D}+2)(\mathcal{D}+4)\mathcal{W},\{\|\cdot\|_{C(\bar{\Omega})},\mathcal{B}\},\Phi)\\
			\mathcal{F}_8&\subset\mathcal{N}_8:=\mathcal{N}(\mathcal{D}+3,(\mathcal{D}+3)\mathcal{W},\{\|\cdot\|_{C(\bar{\Omega})},\mathcal{B}^2\},\Phi)\\
			\mathcal{F}_9&\subset\mathcal{N}_9:=\mathcal{N}(\mathcal{D}+2,(\mathcal{D}+2)\mathcal{W},\{\|\cdot\|_{C(\bar{\Omega})},\mathcal{B}\},\Phi)\\
			\mathcal{F}_{10}&\subset\mathcal{N}_{10}:=\mathcal{N}(\mathcal{D},\mathcal{W},\{\|\cdot\|_{C(\bar{\Omega})},\mathcal{B}\},\Phi)\\
			\mathcal{F}_{11}&\subset\mathcal{N}_{11}:=\mathcal{N}(\mathcal{D}+1,\mathcal{W},\{\|\cdot\|_{C(\bar{\Omega})},\mathcal{B}^2\},\Phi)\\
			\mathcal{F}_{13}&\subset\mathcal{N}_{13}:=\mathcal{N}(\mathcal{D},\mathcal{W},\{\|\cdot\|_{C(\bar{\Omega})},\mathcal{B}\},\Phi).
\end{aligned}
\end{equation*}
	\end{lemma}
	\begin{proof}
		By Proposition \ref{DNN derivative}, we know that for $u$ is a $\mathrm{ReLU}^3$ network with depth $\mathcal{D}$ and width $\mathcal{W}$, $u_{x_i}$ can be implemented by a $\mathrm{ReLU}^2-\mathrm{ReLU}^3$ network with depth $\mathcal{D}+2$ and width $(\mathcal{D}+2)\mathcal{W}$. Then by Proposition \ref{DNN derivative} again we have that $u_{x_ix_j}$ can be implemented by a $\mathrm{ReLU}-\mathrm{ReLU}^2-\mathrm{ReLU}^3$ network with depth $\mathcal{D}+4$ and width $(\mathcal{D}+2)(\mathcal{D}+4)\mathcal{W}$. These facts combining with (\ref{multiplication by ReLU2}) yields the results.
	\end{proof}

	By Lemma  \ref{chaining} and  Proposition  \ref{covering number pdim}, we have for $i=1,2,...,10$,
		\begin{equation} \label{sta error1}
			\begin{split}
				\mathfrak{R}_{{U(\Omega),N}}(\mathcal{F}_i)&\leq\inf_{0<\delta<\mathcal{B}_i}\left(4 \delta+\frac{12}{\sqrt{N}} \int_{\delta}^{\mathcal{B}_i} \sqrt{\log (2\mathcal{C}\left( \epsilon, \mathcal{F}_i, N\right))}\mathrm{d}\epsilon\right)\\
				&\leq\inf_{0<\delta<\mathcal{B}_i}\left(4 \delta+\frac{12}{\sqrt{N}} \int_{\delta}^{\mathcal{B}_i} \sqrt{\log \left(2\left(\frac{eN\mathcal{B}_i}{\epsilon\cdot\mathrm{Pdim}(\mathcal{F}_i)}\right)^{\mathrm{Pdim}(\mathcal{F}_i)}\right)}\mathrm{d}\epsilon\right)\\
				&\leq\inf_{0<\delta<\mathcal{B}_i}\left(4 \delta+\frac{12\mathcal{B}_i}{\sqrt{N}}+\frac{12}{\sqrt{N}} \int_{\delta}^{\mathcal{B}_i} \sqrt{{\mathrm{Pdim}(\mathcal{F}_1)}\cdot\log \left(\frac{eN\mathcal{B}_i}{\epsilon\cdot\mathrm{Pdim}(\mathcal{F}_i)}\right)}\mathrm{d}\epsilon\right).
			\end{split}
		\end{equation}
		Now we calculate the integral. Let
		$
			t=\sqrt{\log(\frac{eN\mathcal{B}_i}{\epsilon\cdot\text{Pdim}(\mathcal{F}_i)})},
		$
		then $\epsilon=\frac{eN\mathcal{B}_i}{\mathrm{Pdim}(\mathcal{F}_i)}\cdot e^{-t^2}$. Denoting $t_1=\sqrt{\log(\frac{eN\mathcal{B}_i}{\mathcal{B}_i\cdot\mathrm{Pdim}(\mathcal{F}_i)})}$, $t_2=\sqrt{\log(\frac{eN\mathcal{B}_i}{\delta\cdot\mathrm{Pdim}(\mathcal{F}_i)})}$, we have
\begin{align} \int_{\delta}^{\mathcal{B}_i}\sqrt{\log\left(\frac{eN\mathcal{B}_i}{\epsilon\cdot\mathrm{Pdim}(\mathcal{F}_i)}\right)}d\epsilon&=\frac{2eN\mathcal{B}_i}{\mathrm{Pdim}(\mathcal{F}_i)}\int_{t_1}^{t_2}t^2e^{-t^2}dt\nonumber\\
				&=\frac{2eN\mathcal{B}_i}{\mathrm{Pdim}(\mathcal{F}_i)}\int_{t_1}^{t_2}t\left(\frac{-e^{-t^2}}{2}\right)'dt\nonumber\\
				&=\frac{eN\mathcal{B}_i}{\mathrm{Pdim}(\mathcal{F}_i)}\left[t_1e^{-t_1^2}-t_2e^{-t_2^2}+\int_{t_1}^{t_2}e^{-t^2}dt\right]\nonumber\\
				&\leq\frac{eN\mathcal{B}_i}{\mathrm{Pdim}(\mathcal{F}_i)}\left[t_1e^{-t_1^2}-t_2e^{-t_2^2}+(t_2-t_1)e^{-t_1^2}\right]\nonumber\\
				&\leq\frac{eN\mathcal{B}_i}{\mathrm{Pdim}(\mathcal{F}_i)}\cdot t_2e^{-t_1^2}=\|u\|\sqrt{\log\left(\frac{eN\mathcal{B}_i}{\delta\cdot\mathrm{Pdim}(\mathcal{F}_i)}\right)}\label{sta error2}
\end{align}
		Combining (\ref{sta error1}) and (\ref{sta error2}) and choosing $\delta=\mathcal{B}_i\left(\frac{\mathrm{Pdim}(\mathcal{F}_i)}{N}\right)^{1/2}\leq\mathcal{B}_i$, we get for  $i=1$,$2$,$3$,$5$,$6$,$7$,$8$,$9$,$10$,
		\begin{equation} \label{sta error3}
			\begin{split}
				\mathfrak{R}_{{U(\Omega),N}}(\mathcal{F}_i)&
				\leq\inf_{0<\delta<\mathcal{B}_i}\left(4 \delta+\frac{12\mathcal{B}_i}{\sqrt{N}}+\frac{12}{\sqrt{N}} \int_{\delta}^{\mathcal{B}_i} \sqrt{\mathrm{Pdim}(\mathcal{F}_i)\cdot\log \left(\frac{eN\mathcal{B}_i}{\epsilon\mathrm{Pdim}(\mathcal{F}_i)}\right)}\mathrm{d}\epsilon\right)\\
				&\leq \inf_{0<\delta<\mathcal{B}_i}\left(4\delta+\frac{12\mathcal{B}_i}{\sqrt{N}}+\frac{12\mathcal{B}_i\sqrt{\mathrm{Pdim}(\mathcal{F}_i)}}{\sqrt{N}}\sqrt{\log\left(\frac{eN\mathcal{B}_i}{\delta\cdot\mathrm{Pdim}(\mathcal{F}_i)}\right)}\right)\\
				&\leq 28\sqrt{\frac{3}{2}}\mathcal{B}_i\left(\frac{\mathrm{Pdim}(\mathcal{F}_i)}{N}\right)^{1/2}\sqrt{\log\left(\frac{eN}{\mathrm{Pdim}(\mathcal{F}_i)}\right)}\\
				&\leq 28\sqrt{\frac{3}{2}}\mathcal{B}_i\left(\frac{\mathrm{Pdim}(\mathcal{N}_i)}{N}\right)^{1/2}\sqrt{\log\left(\frac{eN}{\mathrm{Pdim}(\mathcal{N}_i)}\right)}\\	
				&\leq 28\sqrt{\frac{3}{2}}\max\{\mathcal{B},\mathcal{B}^2\}\left(\frac{\mathcal{H}}{N}\right)^{1/2}\sqrt{\log\left(\frac{eN}{\mathcal{H}}\right)},
			\end{split}
		\end{equation}
		with
		\begin{equation*}	\mathcal{H}=4C(\mathcal{D}+2)^2(\mathcal{D}+4)^2(\mathcal{D}+5)^2\mathcal{W}^2(\mathcal{D}+5+\log((\mathcal{D}+2)(\mathcal{D}+4)\mathcal{W})).
		\end{equation*}
		where in the forth step we apply Lemma \ref{F subset N} and we use Proposition \ref{pdimb} in the last step. Similarly for $i=11,13$,
		\begin{equation} \label{sta error4}
			\mathfrak{R}_{{U(\Omega),N}}(\mathcal{F}_i)
			\leq 28\sqrt{\frac{3}{2}}\max\{\mathcal{B},\mathcal{B}^2\}\left(\frac{\mathcal{H}}{M}\right)^{1/2}\sqrt{\log\left(\frac{eM}{\mathcal{H}}\right)}
		\end{equation}
		Obviously, $\mathfrak{R}_{{U(\Omega),N}}(\mathcal{F}_4)$ and $\mathfrak{R}_{{U(\Omega),N}}(\mathcal{F}_{12})$ can be bounded by the right hand side of (\ref{sta error3}) and (\ref{sta error4}), respectively. Combining Lemma \ref{L decomposition} and  \ref{sym} and (\ref{sta error3}) and (\ref{sta error4}), we have
		{\small\begin{equation*}
			\begin{split}
			&\mathbb{E}_{\{{X_k}\}_{k=1}^{N},\{{Y_k}\}_{k=1}^{M}}\sup_{u\in\mathcal{P}}\left|\mathcal{L}(u)-\widehat{\mathcal{L}}(u)\right|	\leq\sum_{j=1}^{13}\mathbb{E}_{\{{X_k}\}_{k=1}^{N},\{{Y_k}\}_{k=1}^{M}}\sup_{u\in\mathcal{P}}\left|\mathcal{L}_j(u)-\widehat{\mathcal{L}_j}(u)\right|\\ &\leq28\sqrt{\frac{3}{2}}\max\{\mathcal{B},\mathcal{B}^2\}\left(40|\Omega|C_1\left(\frac{\mathcal{H}}{N}\right)^{1/2}\sqrt{\log\left(\frac{eN}{\mathcal{H}}\right)}+12|\partial\Omega|C_2\left(\frac{\mathcal{H}}{M}\right)^{1/2}\sqrt{\log\left(\frac{eM}{\mathcal{H}}\right)}\right),
			\end{split}
		\end{equation*}}
		where
		\begin{equation*}
			\begin{split} C_1&=\max\{\mathfrak{A}^2d^4,\mathfrak{B}^2d^2,\mathfrak{C}^2d^2,\mathfrak{F}^2,\mathfrak{A}\mathfrak{B}d^3,\mathfrak{A}\mathfrak{C}d^2,\mathfrak{A}\mathfrak{F}d^2,\mathfrak{B}\mathfrak{C}d,\mathfrak{B}\mathfrak{F}d,\mathfrak{C}\mathfrak{F}\},\\
				C_2&=\max\{\mathfrak{E}^2,\mathfrak{G}^2,\mathfrak{E}\mathfrak{G}\}.
			\end{split}
		\end{equation*}
		Hence for any $\epsilon>0$, if the number of sample
		\begin{equation*}
			N,M= C(d,\Omega,\mathfrak{Z},\mathcal{B})\mathcal{D}^6\mathcal{W}^2(\mathcal{D}+\log\mathcal{W})\left(\frac{1}{\epsilon}\right)^{2+\delta}
		\end{equation*}
		with $\delta$ being an arbitrarily small number, then we have
		\begin{equation*}
		\mathbb{E}_{\{{X_k}\}_{k=1}^{N},\{{Y_k}\}_{k=1}^{M}}\sup_{u\in\mathcal{P}}\left|\mathcal{L}(u)-\widehat{\mathcal{L}}(u)\right|\leq \epsilon.
		\end{equation*}
	\end{proof}

	\section{Convergence rate   for the PINNs}\label{section convergence rate for the pinns}
With the preparation in last two sections on the bounds of approximation and statistical errors, we will give the main results in this section.
	\begin{theorem} \label{total error}
		Let Assumption \ref{ass:exist} holds true and further assume $u^* \in C^3(\bar\Omega)$. For any $\epsilon>0$, we choose the parameterized neural network  class
		\begin{equation*}	\mathcal{P}=\mathcal{N}\left(\lceil\log_2d\rceil+2,C(d,\Omega,\mathfrak{Z},\|u^*\|_{C^3(\bar{\Omega})})\left(\frac{1}{\epsilon}\right)^{d},\{\|\cdot\|_{C^2(\bar{\Omega})},2\|u^*\|_{C^2(\bar{\Omega})}\},\{\mathrm{ReLU}^3\}\right)
		\end{equation*}
		and let the number of samples be
		\begin{equation*}
			N,M=C(d,\Omega,\mathfrak{Z},\|u^*\|_{C^3(\bar{\Omega})})\left(\frac{1}{\epsilon}\right)^{2d+4+\delta}
		\end{equation*}
		where $\delta$ is an arbitrarily positive number,
  then we have
		\begin{equation*}
			\mathbb{E}_{\{{X_k}\}_{k=1}^{N},\{{Y_k}\}_{k=1}^{M}}\left\|u_{\phi}-u^{*}\right\|_{H^{\frac{1}{2}}(\Omega)}\leq\epsilon.
		\end{equation*}
	\end{theorem}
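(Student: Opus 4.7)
The plan is to apply the error decomposition in Proposition \ref{error decomposition} with $\mathcal{E}_{opt}=0$, so that
\[
\|u_{\phi_{\mathcal{A}}}-u^*\|_{H^{1/2}(\Omega)}^2 \;\le\; \mathcal{E}_{app} + \mathcal{E}_{sta}.
\]
Taking expectation over the samples leaves $\mathcal{E}_{app}$ unchanged and subjects $\mathcal{E}_{sta}$ to the uniform bound of Theorem \ref{sta error}; Jensen's inequality $\mathbb{E}\|\cdot\|_{H^{1/2}} \le (\mathbb{E}\|\cdot\|_{H^{1/2}}^2)^{1/2}$ then reduces the task to making each of the two squared contributions at most $\epsilon^{2}/2$.

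First I would handle the approximation term by feeding Theorem \ref{app error} with tolerance $\tilde\epsilon_{1}$, obtaining a $\mathrm{ReLU}^3$ network $\bar u$ of depth $\lceil\log_2 d\rceil+2$ and width $C(d,\|u^*\|_{C^3(\bar\Omega)})(1/\tilde\epsilon_{1})^{d}$ with $\|\bar u-u^*\|_{C^2(\bar\Omega)}\le \tilde\epsilon_{1}$. For $\tilde\epsilon_{1}\le\|u^*\|_{C^2(\bar\Omega)}$ the triangle inequality gives $\|\bar u\|_{C^2(\bar\Omega)}\le 2\|u^*\|_{C^2(\bar\Omega)}$, so $\bar u\in\mathcal{P}$, and the realized infimum in $\mathcal{E}_{app}$ is attainable. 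Using $\|\cdot\|_{H^{2}(\Omega)}\le C\|\cdot\|_{C^{2}(\bar\Omega)}$ and $\|\cdot\|_{C(\bar\Omega)}\le\|\cdot\|_{C^{2}(\bar\Omega)}$ in the formula for $\mathcal{E}_{app}$ yields $\mathcal{E}_{app}\le C(d,\mathfrak{Z},|\partial\Omega|)\tilde\epsilon_{1}^{2}$. Choosing $\tilde\epsilon_{1}\asymp\epsilon$ makes $\mathcal{E}_{app}\le\epsilon^{2}/2$ and fixes the width at $C(d,\|u^*\|_{C^3(\bar\Omega)})(1/\epsilon)^{d}$, matching the architecture stated in the theorem.

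Next I would handle the statistical term on this already-fixed class. Substituting $\mathcal{D}=\lceil\log_2 d\rceil+2$ and $\mathcal{W}=C(1/\epsilon)^{d}$ into the sample-complexity formula of Theorem \ref{sta error} (with $\mathcal{B}=2\|u^*\|_{C^2(\bar\Omega)}$), the factor $\mathcal{D}^{6}\mathcal{W}^{2}(\mathcal{D}+\log\mathcal{W})$ collapses to $C(d,\|u^*\|_{C^3(\bar\Omega)})(1/\epsilon)^{2d}$, so setting the statistical tolerance $\tilde\epsilon_{2}\asymp \epsilon^{2}$ requires
\[
N,M \;=\; C(d,\Omega,\mathfrak{Z},\|u^*\|_{C^3(\bar\Omega)})\,\Bigl(\tfrac{1}{\epsilon}\Bigr)^{2d}\Bigl(\tfrac{1}{\epsilon^{2}}\Bigr)^{2+\delta'} \;=\; C\Bigl(\tfrac{1}{\epsilon}\Bigr)^{2d+4+\delta},
\]
absorbing $2\delta'$ into the arbitrarily small constant $\delta$. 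With this choice, $\mathbb{E}\,\mathcal{E}_{sta}\le \epsilon^{2}/2$.

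Putting the two bounds together, $\mathbb{E}\|u_{\phi_\mathcal{A}}-u^*\|_{H^{1/2}(\Omega)}^{2}\le \epsilon^{2}$, and Jensen delivers the claim. The main obstacle I anticipate is the bookkeeping at the interface between the two theorems: ensuring that the specific approximant from Theorem \ref{app error} actually satisfies the $C^{2}$-norm constraint built into $\mathcal{P}$ (so that the infimum defining $\mathcal{E}_{app}$ is over a class containing it), and tracking how the coupled scaling of $(\mathcal{D},\mathcal{W})$ with $\epsilon$ inflates the sample-complexity factor $\mathcal{W}^{2}$ by precisely $(1/\epsilon)^{2d}$, yielding the exponent $2d+4+\delta$ rather than $4+\delta$.
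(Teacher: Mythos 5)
Your proposal is correct and follows essentially the same path as the paper's proof: apply Proposition~\ref{error decomposition} with $\mathcal{E}_{opt}=0$, invoke Theorem~\ref{app error} to produce an approximant $\bar u$ whose $C^2$-norm the triangle inequality keeps within $2\|u^*\|_{C^2(\bar\Omega)}$ (so $\bar u\in\mathcal{P}$), invoke Theorem~\ref{sta error} with tolerance $\asymp\epsilon^2$ on the width-$(1/\epsilon)^d$ class to get the $(1/\epsilon)^{2d+4+\delta}$ sample count, and pass from the expected squared $H^{1/2}$-error to the expected error via Jensen. The paper leaves the final Jensen step implicit, but otherwise your bookkeeping — including the key observation that the $\mathcal{W}^2$ factor contributes $(1/\epsilon)^{2d}$ on top of the $(1/\epsilon)^{4+\delta}$ from the squared statistical tolerance — matches the paper's argument.
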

	\begin{proof}
		For any $\epsilon>0$, by Theorem \ref{app error}, there exists an neural network function $\bar{u}$ with depth $\lceil\log_2d\rceil+2$ and width $C(d,\Omega,\mathfrak{Z},\|u^*\|_{C^3(\bar{\Omega})})\left(\frac{1}{\epsilon}\right)^{d}$ such that
		\begin{equation*}
			\|u^*-\bar{u}\|_{C^2(\bar{\Omega})}\leq \left(\frac{\epsilon^2}{3C(d^2+3d+4)|\Omega|\max\{2d^2\mathfrak{A}^2,d\mathfrak{B}^2,\mathfrak{C}^2\}+2C|\partial\Omega|\mathfrak{E}^2}\right)^{1/2}.
		\end{equation*}
		Without loss of generality we assume that $\epsilon$ is small enough such that
		\begin{equation*}
			\|\bar{u}\|_{C^2(\bar{\Omega})}\leq\|u^*-\bar{u}\|_{C^2(\bar{\Omega})}+\|u^*\|_{C^2(\bar{\Omega})}\leq 2\|u^*\|_{C^2(\bar{\Omega})}.
		\end{equation*}
		Hence $\bar{u}$ belongs to the function class
		\begin{equation*} \mathcal{P}=\mathcal{N}\left(\lceil\log_2d\rceil+2,C(d,\Omega,\mathfrak{Z},\|u^*\|_{C^3(\bar{\Omega})})\left(\frac{1}{\epsilon}\right)^{d},\{\|\cdot\|_{C^2(\bar{\Omega})},2\|u^*\|_{C^2(\bar{\Omega})}\},\{\mathrm{ReLU}^3\}\right).
		\end{equation*}
		And
		\begin{equation} \mathcal{E}_{app}\leq\frac{3}{2}(d^2+3d+4)|\Omega|\max\{2d^2\mathfrak{A}^2,d\mathfrak{B}^2,\mathfrak{C}^2\}\|\bar{u}-u^*\|_{C^2(\bar{\Omega})}^2+|\partial\Omega|\mathfrak{E}^2\|\bar{u}-u^*\|_{C^2(\bar{\Omega})}^2\leq\frac{\epsilon^2}{2C}\label{error1}.
		\end{equation}
		By Theorem \ref{sta error}, when the number of sample
		\begin{equation*}
			N,M= C(d,\Omega,\mathfrak{Z},\mathcal{B})\mathcal{D}^6\mathcal{W}^2(\mathcal{D}+\log\mathcal{W})\left(\frac{1}{\epsilon}\right)^{4+\delta}=C(d,\Omega,\mathfrak{Z},\|u^*\|_{C^3(\bar{\Omega})})\left(\frac{1}{\epsilon}\right)^{2d+4+\delta}
		\end{equation*}
		with $\delta$ being an arbitrarily positive number,  we have
		\begin{equation} \label{error2} \mathcal{E}_{sta}=\mathbb{E}_{\{{X_k}\}_{k=1}^{N},\{{Y_k}\}_{k=1}^{M}}\sup_{u\in\mathcal{P}}\left|\mathcal{L}(u)-\widehat{\mathcal{L}}(u)\right|\leq \frac{\epsilon^2}{2C}
		\end{equation}
		Combining Proposition \ref{error decomposition},  (\ref{error1}) and (\ref{error2}) yields the result.
	\end{proof}

\begin{remark}
The asymptotic convergence results for PINNs have been studied in \cite{shin2020convergence,shin2020error,mishra2020}, i.e., when the number of parameters in the neural networks and number of training samples go to infinity,  the solution of PINNs with converges to the solution of the PDEs'. Our work establishes a nonasymptotic convergence rate of PINNs.
According to our results in Theorem \ref{total error},    the influence  of  the depth and width in the neural networks and number of training samples are characterized quantitatively.  It gives an answer on how to choose the hyperparameters to archive the desired accuracy, which is missing in \cite{shin2020convergence,shin2020error,mishra2020}.
\end{remark}
\begin{remark}
As shown in Theorem \ref{total error},   the PINNs suffers from the curse of dimensionality.
However, if no additional smoothness or low-dimensional compositional structure on the underlying  solutions of  PDEs are imposed, the curse is unavoidable. Recently,   the  minimax lower bound for  PINNs  are proved in \cite{luoptimal}, where they showed that to achieve error of $\mathcal{\epsilon}$ for PINNs to solve  elliptical equation defined  on $[0,1]^{d}$ whose solution lives in $H^1([0,1]^{d})$, the number of samples $n$ are lower bounded by $\mathcal{O}((1/\epsilon)^d).$ In \cite{lu2021priori}, the authors reduce the curses by assuming the underlying solutions living in  Barron spaces which is much smaller than $H^1$. One can also utilize the structures of the solution to reduce the curse, for example by considering PDEs whose solution are composition of functions with number of variables much smaller than $d$.
\end{remark}

\section{Conclusion}\label{conclusion}
\par This paper provided an analysis of convergence rate for PINNs.  Our results give a way about how to set depth and width of networks to achieve the desired convergence rate in terms of number of training samples. The estimation on the approximation error of deep $\mathrm{ReLU}^{3}$ network is established in $C^2$ norms. The statistical error can be derived technically by the Rademacher complexity of the non-Lipschitz composition with  $\mathrm{ReLU}^{3}$ network. It is interesting to extend the current analysis of PINNs to PDEs with other boundary conditions or the optimal control (inverse) problems.

\section*{Acknowledgments}\label{section acknowledgments}
The authors would like to thank the anonymous referees for their useful suggestions that help
improve the manuscript.

\bibliographystyle{siam}
\bibliography{ref}

\end{document}